\newcommand\HUGE{\@setfontsize\Huge{45}{54}}
\newdimen\deltay
\def\Ddot#1#2(#3,#4,#5,#6){\deltay=#6\setbox1=\hbox to0pt{\smash{\dotcnt=1
\kern#3\loop\raise\dotcnt\deltay\hbox to0pt{\hss#2}\kern#5\ifnum\dotcnt<#1
\advance\dotcnt 1\repeat}\hss}\setbox2=\vtop{\box1}\ht2=#4\box2}
\def\clap#1{\hbox to 0pt{\hss#1\hss}}
\newcommand\Z{\mathbb{Z}}
\DeclareMathOperator{\supp}{supp}
\newcommand\R{\mathbb{R}}
\newcommand\Q{\mathbb{Q}}
\renewcommand{\i}{\mathbf{i}}
  \newcommand{\thistheoremname}{}
\newtheorem*{genericthm*}{\thistheoremname}
\newenvironment{namedthm*}[1]
  {\renewcommand{\thistheoremname}{#1}%
   \begin{genericthm*}}
  {\end{genericthm*}}
\newtheorem{lemma}{Lemma}[section]
\newtheorem{proposition}[lemma]{Proposition}
\newtheorem{corollary}[lemma]{Corollary}
\newtheorem{example}[lemma]{Example}
\theoremstyle{definition}
\newtheorem{remark}[lemma]{Remark}
\newtheorem{definition}[lemma]{Definition}
\newtheorem{question}[lemma]{Question}
\font\manfnt=manfnt
\newcommand\twoheaddownarrow{\hbox to 0pt{\raisebox{0.3ex}{$\downarrow$}}
  \hbox to 0pt{\raisebox{-0.2ex}{$\downarrow$}}\phantom\downarrow}
\title{Poisson boundary for uppper-triangular groups}
\author{Anna Erschler}
\address{A.E.: C.N.R.S., \'{E}cole Normale Superieur, PSL Research University, 45 rue d'Ulm, 75005, Paris,  France}
\email{anna.erschler@ens.fr}
\author{Josh Frisch}
\address{J.F.: C.N.R.S., \'{E}cole Normale Superieur, PSL Research University, 45 rue d'Ulm, 75005, Paris,  France}
\email{joshfrisch@gmail.com}
\author{Mark Rychnovsky}
\address{M.R. :Department of Mathematics, USC, 3620 S. Vermont Ave. CA 90089}
\email{rychnovs@usc.edu}
\date{February 2023}
\begin{document}

\maketitle

\begin{abstract} We prove that finite entropy random walks on the torsion-free Baumslag group in dimension $d=2$ have non-trivial Poisson boundary. This is in contrast with the torsion case where the situation for simple random walks on  Baumslag groups is the same as for the lamplighter groups of the same dimension.
Our proof uses the realization of the Baumslag group as a linear group.
We define and study a class of linear groups associated with multivariable polynomials which we denote $G_k(p)$.
We show that the groups $G_3(p)$ have non-trivial Poisson boundary for all irreducible finite entropy measures, under a condition on the polynomial $p$ which we call the spaced polynomial property.
We show that the Baumslag group has $G_3(1+x-y)$ as a subgroup, and that 
the polynomial $p = 1+x-y$, satisfies this property. 
Given any upper-triangle  group of characteristic zero, we prove that one of the following must hold: 1) all finite second moment symmetric random walks on $G$ have trivial boundary 2) the group admits a block, which has a $3$ dimensional wreath product as a subgroup, and all non-degenerate random walks on $G$ have non-trivial boundary. 3) $G$ has a group $G_3(p)$ as a subgroup.
We give a conjectural characterisation of all polynomials satisfying the spaced polynomial property. If this is confirmed, our result provides a characterisation of linear groups $G$ which admit a finitely supported symmetric random walk with non-trivial boundary.

\end{abstract}

\section{Introduction}

Linear groups are at the heart of the study of the Poisson boundary.
The famous result of Furstenberg \cite{furstenberg63} describes the Poisson boundary of semi-simple Lie groups (for measures that are absolutely continuous with respect to the Haar measure). 
He also proved an analogous result in the discrete setting for certain special measures \cite{furstenberg70}.
 More generally, 
Theorem 10.3 in Kaimanovich \cite{kaimanovich2000} provides a complete description of the Poisson boundary for finite first moment random walks on discrete subgroups of a semi-simple Lie group. Additionally, the theorem in Section 10.7 of that paper identifies the Poisson boundary with the exit measure on the Furstenberg boundary under an assumption of measure decay.

The question of the boundary for solvable linear groups is discussed in \cite{furstenberg73}. In particular, see the “open question” about groups of $2\times 2$ upper triangular matrix groups on page 215 of Furstenberg \cite{furstenberg73}. For symmetric measures on a connected amenable Lie group that are absolutely continuous with respect to the Haar measure, Birg{\'e} and Raugi’s result \cite{BirgeRaugi} shows that the boundary is trivial. However, for discrete solvable groups, the situation is quite different. Amenable groups can have a non-trivial boundary even for simple random walks on them. This happens in particular for solvable linear groups, with the first examples of this kind being Lamplighter groups \cite{kaimanovichvershik}.

The last few decades have seen significant progress in understanding the Poisson boundaries of groups. See \cite{furman} for an overview of topics including random walks on semi-simple Lie groups and discrete linear groups. For a survey on the Poisson boundary, see also \cite{erschler2010}, \cite{zhengicm}.
These papers discuss in particular various classes of groups, which are far from being  solvable. 
We also mention various applications of boundary triviality to other asymptotic properties of groups. For example, boundary triviality, based on   the entropy criterion,  is used in \cite{bartholdivirag}, \cite{kaimanovich}, \cites{BartholdiKaNe, AmirAngelVirag} to prove amenability for certain families of non-elementary amenable groups. See also \cites{erschlersubexp, erschlerzheng} which use boundary nontriviality to provide lower bounds for the growth of groups.

However, the basic question of boundary triviality/non-triviality has remained open for the very classical family of linear groups. In \cite{erschlerfrisch}, the first two named authors classified which finitely generated solvable linear groups over characteristic $p>0$ fields have non-trivial Poisson boundary for simple random walks. 
In this paper, we address the question of finitely generated linear groups over fields of characteristic $0$. 
We emphasize that there is not even a conjectural characterisation for general solvable groups.

We begin by formulating our result in the particular example of torsion-free Baumslag groups. Recall that wreath products $\mathbb{Z}^d \wr \mathbb{Z}$ and Lamplighter groups $\mathbb{Z}^d\wr \mathbb{Z}/2\mathbb{Z}$ were the first examples (due to Kaimanovich and Vershik \cite{kaimanovichvershik}) to show that amenable groups can admit symmetric random walks (in particular simple random walks) with non-trivial Poisson boundary. This happens for such wreath products for $d\ge 3$. On the other hand, for $d=1$ or $2$, such wreath products provide examples of groups of exponential growth with trivial boundary for simple random walks. These groups are finitely generated but not finitely presented. The Baumslag group $B_1(\mathbb{Z})$ (see Definition \ref{def:baumslaggroups}) was constructed in \cite{BaumslagB1} as an example of a finitely presented group that admits an Abelian group of infinite rank as a normal subgroup. Its $d$-dimensional analog $B_d(\Z)$ (also defined in Definition \ref{def:baumslaggroups}) is another example illustrating Baumslag’s theorem \cite{Baumslag} stating that any finitely generated metabelian group can be embedded into a finitely presented metabelian group. We mention that this theorem was apparently independently proven by Remeslennikov \cite{Remeslennikov75}, see Theorem 13.1 of that paper and the references therein for earlier announcements of this result. In the case of wreath products, Baumslag’s construction leads to the groups $B_d(\mathbb{Z})$ and $B_d(\mathbb{Z}/2 \mathbb{Z})$.

We call $B_d(\mathbb{Z})$ a torsion-free Baumslag group. For $d=1$, it is clear that $B_d(\mathbb{Z})$ and $B_d(\mathbb{Z}/2 \mathbb{Z})$ have trivial boundary for simple (and for any finite second moment) random walks since the groups are quotients of $\mathbb{Z}^2\wr \mathbb{Z}$. For $d\ge 3$, it is proven in \cite{erschlerliouv} that any non-degenerate finite entropy random walk on $B_d(\mathbb Z/p\mathbb Z)$ or $B_d(\mathbb{Z})$ has non-trivial boundary. The non-degenerateness assumption (that the support generates the group as a semigroup) can be replaced by the irreducibility assumption. A recent result of the first two authors (Example 2.9 in \cite{erschlerfrisch}) shows that any finite second moment random walk on $B_d(\mathbb{Z}/p\mathbb{Z})$ has trivial Poisson boundary.

Wreath products not only embed into Baumslag groups but also exhibit similar behavior to these groups for some asymptotic properties. Centered finite second-moment random walks on the wreath product $\Z^d \wr \Z / p \Z$ and the related Baumslag group $B_d(\Z / p \Z)$ both have nontrivial Poisson boundary if and only if $d \geq 3$. A similar statement is true for $\Z^d \wr \Z$ and the related Baumslag group $B_d(\Z)$ in the previously known cases ($d \neq 2$). We consider the only remaining case $B_2(\Z)$. As previously stated, centered, finite second-moment random walks on the related groups $\Z^2 \wr \Z$ and $B_2(\Z / p \Z)$ have trivial Poisson boundary; In contrast, we prove:

\begin{namedthm*}{Theorem A} Any finite entropy irreducible random walk on $B_2(\mathbb{Z})$ has non-trivial Poisson boundary. \end{namedthm*}

The analogous claim also holds for a subgroup of $B_2(\mathbb{Z})$, which we call the restricted (torsion-free) Baumslag group $\bar{B}_2(\mathbb{Z})$. This restricted Baumslag group is a particular case of groups in the family $G_3(p)$. Given a polynomial in $X_1$, $X_2$, $X_3$, we denote by $G_3(p)$ the upper-triangular group over the ring $\mathbb{Z}(X_1^{\pm 1},X_2^{\pm 1},X_3^{\pm 1})/(p(X_1,X_2,X_3))$ generated by the following four matrices: \begin{equation} 
\delta=\left( \begin{array}{ccc} 
1 & 1 \\
0 & 1 
\end{array} \right),
\medspace \medspace 
M_{x_i}= \left( \begin{array}{ccc} 
1 & 0 \\
0 & X_i
\end{array} \right), \end{equation} Where $i=1$, $2$, or $3$.

(We refer to  Definition \ref{def:gkI} and Section \ref{sec:prelim} for a more general class of groups and their basic properties).
We show that the groups $G_3(p)$ are crucial for our understanding
of the Liouville property for linear groups of characteristic zero, see Theorem C.

\begin{definition} \label{def:SPP}
A Laurent polynomial $p$ in
$x_1$, \dots, $x_d$ satisfies the spaced polynomial property if there exists some $N>0$ so that for any non-zero Laurent polynomial $u$ with coefficients in $0, \pm 1$, $p$ does not divide $u(x_1^N,\dots,x_d^N)$. 
\end{definition}

We prove
\begin{namedthm*}{Theorem B}
\label{prop:generalargument}
Assume that  a polynomial $p(x_1,x_2,x_3)$ has the spaced polynomial property. Then any finite entropy irreducible measure $\mu$ on $G_3(p)$ has nontrivial Poisson boundary.  
\end{namedthm*}

We explain in Remark \ref{rem:charpG3q} that the characteristic $p$ analogue $G_3(q, \mathbb{Z}/p \mathbb{Z})$ of groups $G_3(p)$ has trivial boundary for any non-zero polynomial $q$.

Before formulating our result for linear groups in characteristics $0$, we recall some results from \cite{erschlerfrisch}, where we characterise Liouville simple random walks for group of positive characteristic.
First we mention that in Thm B
of \cite{erschlerfrisch}, we proved a reduction of the Liouville property for a random walk on a group $G$ of upper-triangular matrices (over an arbitrary field $k$) to the Louiville property for random walks on
"valid blocks", a family of naturally defined $2\times 2$ matrices, associated to the linear group. See Subsection
\ref{subsec:bb} and Definition \ref{def:blocks} in the current paper for a review.
Using this reduction, we have characterised the Liouville property for simple
random walks on any linear group of characteristics $p$. A finite second moment random walk has the Liouville property 
if all valid blocks have dimension $\le 2$. Otherwise, if there exists at least one valid block of dimension $d\ge 3$, any finite entropy non-degenerate random walk has non-trivial boundary.
While even in positive characteristic the groups in question might have richer asymptotic geometry than wreath products, the
situation with the Liouville property turns out to be in a strong analogy with the example of Lamplighter groups (since $\mathbb{Z}^d \wr \mathbb{Z}/p\mathbb{Z}$ have dimension $d$, and a well-known argument of \cite{kaimanovichvershik} shows that the boundary is non-trivial if and only if $d\ge 3$).

\begin{namedthm*}{Theorem C} Let $G$ be a finitely generated linear group
of characteristic $0$. Then at least one of the following properties hold.
\begin{itemize}
    \item $G$ contains a free non-Abelian group as a subgroup. 
    \item $G$ has a finite index subgroup, of uppertriangular matrices, such that at least one valid block
    contains $\mathbb{Z}^3 \wr \mathbb{Z}$ as a subgroup.
    \item $G$ has a finite index subgroup, of uppertriangular matrices, such that at least one block
    contains $G_3(p)$ as a subgroup, for a polynomial $p$ with integer coefficients
    in $3$ variables which is irreducible over $\mathbb{Z}$ and is not generalized cyclotomic (see Definition \ref{def:gencyc}).   
    \item All finite second moment measures on $G$ have trivial Poisson boundary.
\end{itemize}
\end{namedthm*}

By Tits alternative, the first case is equivalent to the non-amenability of $G$, and it is well-known that for non-amenable groups all irreducible measures have non-trivial boundary (see \cites{azencott, furstenberg73}; see more on this in the section \ref{sec:prelim}). If the group is amenable, it is known that there exists an irreducible measure (which can be chosen to be symmetric and have full support $\supp \mu =G$) such that the Poisson boundary is trivial \cites{kaimanovichvershik, rosenblatt74}.

In the second case the boundary of any non-degenerate finite entropy random walk on $G$ is non-trivial, as shown in \cite{erschlerfrisch}.

Case 3 does not have an analog in characteristic $p$ and is a primary motivation for this paper. 

We mention that it is easy to see that if a polynomial (in several variables) is a generalized cyclotomic, then it does not satisfy the spaced polynomial property (see Lemma \ref{lem:easycyclotomic}). We conjecture that the converse is true

\begin{question}
Let $p$ be a non-zero irreducible polynomial in several variables, which is not a generalized cyclotomic. Is it true that $p$ must satisfy the spaced polynomial property?
\end{question}

Observe that if the answer to this question is positive, then by combining Theorem B and Theorem C we get a characterisation of finitely generated linear groups that admit a finitely supported measure with non-trivial Poisson boundary.

{\bf Acknowledgements.} 
This project has received funding from the European Research Council (ERC) under
the European Union's Horizon 2020 
research and innovation program 
(grant agreement
No.725773). The work of the first named author was also supported by ANR-22-CE40-0004   GOFR
The work of the second named author was also supported by NSF Grant DMS-2102838.
The work of the third named author was also supported by
the Fernholz Foundation and the NSF Grant No. DMS-1664650.
The first named author is grateful to Vadim Kaimanovich for many discussions on the history of boundary theory.

\section{Preliminaries}\label{sec:prelim}

\subsection{Notation and some classes of groups we consider}

\begin{definition}\label{def:gkI}
Let $X_1, X_2, \dots, X_k$ be a finite set of formal variables. 
Let $I$ be a prime ideal in the ring $R$
of Laurent polynomials over $R$ in $X_1,\dots,X_k$. Then $G_k(I)$ is the group generated by the following matrices

\begin{equation}
   \delta=\left( \begin{array}{ccc}
1 & 1  \\
0 & 1
 \end{array} \right),
 \medspace \medspace
 M_{x_i}=
\left( \begin{array}{ccc}
1 & 0  \\
0 & X_i\\
 \end{array} \right).
\end{equation}
over the ring $\Z(X_1^{\pm 1},\dots,X_k^{\pm })/I$.
Here $1\le i \le k$.
\end{definition}
In the sequel for our results, we assume that $R=\mathbb{Z}$. We will also mention some examples for $R=\mathbb{Z}/p\mathbb{Z}$.

If  the ideal $I$ is generated by a polynomial $p$, we also use the notation $G_k(I) =G_k(p)$.
Also, given several polynomials with integer coefficients in $k$ variables so that 
$$
I= (p_1, \dots , p_m),
$$
we can use the notation $G_k(I) =G_k(p_1, \dots, p_m)$.

\begin{remark}\label{rem:normalformgkI}
[Normal form of the elements in $G_k(I)$]
Observe that elements of $G_k(I)$ are of the form

$$
\left( \begin{array}{ccc}
1 & f(X_1,X_2,\dots, X_k)  \\
0 & X_1^{i_1} X_2^{i_2} \dots X_k^{i_k}\\
 \end{array} \right)
$$

Here $i_1$, \dots, $i_k$ are integers.
 $f$ is a polynomial in $X_i^{\pm 1}$ considered modulo $I$.
 Any element of the form above belongs to $G_k(I)$.
We note that the additive group with values in the upper right corner is isomorphic to the Abelian group of unipotent elements in $G_k(I)$ under multiplication.
Also note that there is a homomorphism from our group to $\mathbb{Z}^k$, given by the mapping to the monomial in  the lower right corner.
\end{remark}

\begin{remark} If $I \cap \Z =0$, we can tensor our ring by $\mathbb{Q}$ and consider our matrices over the field
$\mathbb{Q}(X_1^{\pm 1},\dots,X_k^{\pm })/(I \otimes \mathbb{Q})$.
This will be our convention for the particular case of such groups: for example, for the restricted Baumslag groups discussed below.
\end{remark}

We recall that a wreath product of $A$ and $B$ is a semi-direct product 
$G=A \wr B=
A \ltimes \sum_A B$,
where $A$ acts by shifts on the index set. We use the notation $A \wr B$ (as e.g. in \cite{kaimanovichvershik}, while some papers use
the notation $B\wr A$ for the acting group $A$).
In the case of $B=\mathbb{Z}/2\mathbb{Z}$ such groups are called lamplighter groups, since the element of  
$\sum_A B$ can be described as "lamps" at the points of $A$.

The elements of the lamplighter group $\Z^k \wr \Z$ can be represented as
\[\left( \begin{array}{ccc}
1 & f(x_1,...,x_k) \\
0 & x_1^{i_1} \dots x_k^{i_k}\\
\end{array} \right) \]

where the element in the upper right corner $f(x)$ is a Laurent polynomial in $x_1$, \dots, $x_k$.
Here the monomial in the lower right entry of the matrix corresponds to the projection to the base group $\mathbb{Z}^k$.
In the "lamp" interpretation, mentioned above, non-zero monomials of $f$ correspond to positions in $\mathbb{Z}^d$ where the lamp is lit.

In particular, in our notation if $I=0$ is the trivial ideal of $\Z(X_1^{\pm 1},\dots,X_k^{\pm })$, then
$G_k(0) =G_k(I)$ is isomorphic to the $k$ dimensional lamplighter group $\Z^k \wr \Z$. 

\begin{definition}[Baumslag groups] \label{def:baumslaggroups}

The Baumslag groups $B_d(\mathbb{Z}/p\mathbb{Z}) \subset 
GL_2((\Z/p\Z)(Y_1,..., Y_d)$ and
$B_d(\mathbb{Z}) \subset GL_2((\Z)(Y_1,...,Y_d)$, are the groups
generated by $2\times 2$ matrices
of the form 
$$
\delta=\left( \begin{array}{ccc}

1 & 1  \\

0 & 1
 \end{array} \right),
 \medspace \medspace
 M_{y_i}=
\left( \begin{array}{ccc}
1 & 0  \\
0 & Y_i\\
 \end{array} \right), 
\medspace \medspace
M_{y_i+1}=
 \left( \begin{array}{ccc}
1 & 0  \\
0 & Y_i+1\\
 \end{array} \right).
$$
over the field of rational functions $(\Z/p\Z)(Y_1,...Y_d)$ and $(\Q(Y_1,...Y_d)$ respectively
($1 \le i \le d$). 
\end{definition}

We consider  the homomorphisms from $B_d$  to $\mathbb{Z}^d$ (and to $\mathbb{Z}^{2d}$), defined  for $\mathbb{Z}$ and $\mathbb{Z}/p\mathbb{Z}$ cases, that sends an element with $\prod Y_i^{\alpha_i} \prod (Y_i + 1)^{\beta_i}$ in the lower right corner to $(\alpha_1, \dots, \alpha_d)$ (correspondingly to $(\alpha_1, \dots, \alpha_d, \beta_1, \dots, \beta_d)$) and denote them as $\pi$  and $\phi$.

\begin{remark}
The group $B_d(\mathbb{Z})$ is finitely presented.
The group has the presentation
$$
\delta^{M_{y_i+1}} = \delta \delta^{M_{y_i}}
$$
$$ [M_{y_i}, M_{y_j}]=[M_{y_i+1},M_{y_j+1}]=
[M_{y_i},M_{y_j+1}]=1 = [\delta^{u}, \delta^{v}],
$$
where in the relations above $1 \le i,j \le d$;
and $u$ and $v$ are product of $M_{y_i}$ and $M_{y_i+1}$,
$1 \le i,j \le d$,
to the power $0$ or $1$.
See e.g. second claim of Lemma 5.1 in \cite{erschlerliouv}.
Here we use the notation $a^b$ for the conjugation of $a$ by $b$.
\end{remark}

\begin{remark} As explained in the introduction, Theorem A establishes that the entropy of $B_2(\mathbb{Z})$ grows linearly, which is in contrast to the sublinear growth of the entropy function for $B_2(\mathbb{Z}/p\mathbb{Z})$. 
Let us mention a property of a very different nature (not related to random walks), where $B_d(\mathbb{Z})$ and $B_d(\mathbb{Z}/p\mathbb{Z})$ have different asymptotic properties.
A result of Kassabov and Riley \cite{KassabovRiley} shows that
the Dehn function of $B_1(\mathbb{Z})$ is exponential,
while by a result of de Cornulier and Tessera \cite{deCornulierTessera}
the Dehn function $B_1(\mathbb{Z}/p \mathbb{Z})$ is quadratic.
\end{remark}

In this paper, we are interested in the case $d=2$. $B_2(\Z/p\Z)$ is the Baumslag group and $B_2:=B_2(\Z)$ is the torsion-free Baumslag group. We also consider the {\it restricted} Baumslag group which is generated by
$4$ generators
$\delta$, $M_{x_1}$, $M_{x_2}$ and $M_{x_1+1}$ which we denote by $\tilde{B}_2(\Z)$

Observe that 
\begin{equation} \label{eq:Baumslag} \tilde{B}_2(\Z)=G_3(1+x_1-x_2), \qquad B_2(\Z)=G_4(1+x_1-x_2,1+x_3-x_4) \end{equation}

Indeed, There is an isomorphism sending $M_{y_1}$ to $M_{x_1}$, $M_{y_1+1}$ to $M_{x_2}$, $M_{y_2}$ to $M_{x_3}$ and $M_{y_2+1}$ to $M_{x_4}$.

\subsection{Trajectories of random walks, boundary and asymptotic entropy}

The random walk $(G,\mu)$, defined by a probability measure $\mu$ on a countable group $G$, is a Markov chain with state space $G$, with transition probabilities from $g$ to $gh$ being equal to $\mu(h)$, for all $g,h \in G$.
As we have mentioned in the introduction, a random walk is called irreducible if its support generates $G$ as a group. Any random walk is irreducible if we consider it as a random walk on a group generated by its support.

We recall the definition of the entropy of random walks, also called asymptotic entropy.

\begin{definition}
Consider a countable group $G$ and a probability measure $\mu$ on $G$.
The {\it entropy} of a random walk $(G,\mu)$,
is defined as  the limit of the normalised entropies of its convolution: 
$$
h=\lim_{n\to \infty} H(n)/n,
$$
where
$H(n)=H(\mu^{*n}) =\sum_{g\in G} -\log (\mu^{*n}(g)) \mu^{*n}(g)$. 
\end{definition}

The notion of the entropy of random walks is due to Avez \cite{avez}, who proved that for finitely supported random walks the boundary is trivial if $h=0$. This finite support assumption turned out to be inessential, and the sufficient condition above is also necessary. This is due to Kaimannovich-Vershik \cite{kaimanovichvershik}[Thm 1.1] and Derriennic \cite{derriennic}:

\begin{namedthm*}{Entropy criterion}
 \label{th:entropytoboundary}
Let $G$ be a discrete countable group, and let $\mu$ be a finite entropy measure on $G$. The Poisson boundary of $(G,\mu)$ is trivial if and only if the entropy of the random walk $h$ is zero.
\end{namedthm*}

In other words, the Poisson boundary of $(G,\mu)$ is non-trivial 
if and only if the entropy of the $n$ step distribution of the  random walk $H(n)$ is linear in $n$.

\begin{definition}
We say that a function $F:G \to \R$ is $\mu$-{\it harmonic}, if for all $g\in G$ it holds
$F(g) = \sum_{h\in G} F(gh) \mu(h)$.
\end{definition}

The Poisson boundary 
can be defined in terms
of bounded harmonic functions
on the subgroup of $G$,
generated by the support of $\mu$ and with values in $\mathbb{R}$.

Non-triviality of the Poisson boundary is equivalent to the  existence of  non-constant bounded harmonic functions on the group, generated by the support of $\mu$.  

\begin{definition}
Given a probability measure on $G$, we say that $(G,\mu)$ satisfies Liouville property if any bounded harmonic function on the group, generated by the support of $\mu$, is constant.
\end{definition}

There are several ways to define the Poisson boundary (see \cite{kaimanovichvershik}, Section 0.3). For general Markov chains, some of these definitions lead to different notions, but in the case of random walks on groups, these notions are equivalent.

Given an equivalence relation on a probability space $X$, its measurable hull is a $\sigma$-algebra  of all measurable subsets of the path space which are unions  of the equivalence classes. 

\begin{definition}[Poisson boundary]

Consider the space of one-sided infinite trajectories $G^\infty$.
We say that two infinite one-sided trajectories
$X$ and $Y$ are equivalent if they coincide, up to a possible time shift,  after some instant. This means that 
there exist $N,k\ge 0$ such that $X_i=Y_{i+k}$ for all $i>N$. Consider the measurable
hull of this equivalence relation in $G^\infty$. 
The quotient of 
the probability space $G^\infty$
by the obtained equivalence relation is called the {\it Poisson boundary} (also called the {\it Poisson-Furstenberg boundary}).
\end{definition}

In the definition above, if we consider the equivalence relation $X_i=Y_{i}$ (without allowing the time shift) we obtain the definition of the tail boundary. While for general Markov chains the tail boundary can be larger, for random walks on groups these notions are equivalent.

In this paper, we do not use particular definitions of the boundary. The proof of our main results will rely on new lower bounds for the asymptotic entropy of the random walks.

Now we mention that there are also many equivalent definitions of amenability of groups:
in terms of isoperimetric inequalities (F{\o}lner criterion), existence of invariant means, and many others. Below we recall the one in terms of return probability of random walks (Kesten's criterion, see e.g. Corollary 12.5 in \cite{woessbook}):
\begin{definition}
Let $\mu$ be a symmetric probability measure on $G$, whose support generates $G$.
A group $G$ is said to be amenable if the $n$ step return probabilities of the random walk corresponding to $\mu$ satisfy
$$
\mu^{*2n}(e)
$$
is subexponential in $n$.
\end{definition}
The property in the definition above does not depend on the choice of $\mu$.
If the group is amenable, all irreducible symmetric random walks on $G$ have subexponential decay of return probabilities. The assumption of symmetricity is essential, it is easy to see that finitely supported non-centered random walk on $\mathbb{Z}$
(and many other groups)
have exponential decay of probability to return to the origin.
If the group is non-amenable, then for any irreducible random walk, symmetric or not,
$\mu^{*n}(e)$ has exponential decay.

We have mentioned already that any irreducible random walk on a non-amenable group has non-trivial boundary. Indeed, the result of Azencott (Proposition II.1 \cite{azencott})
shows  moreover that almost surely the stabilizer of a point on the Poisson boundary for a random walk on $G$ has the fixed point property with respect to $G$. Thus, if the Poisson boundary for a random walk defined by an irreducible measure is trivial, then $G$ has a fixed point property with respect to $G$. The latter property  is equivalent to amenability.
See also \cite{furstenberg73}, where Furstenberg proves non-triviality of an irreducible random walk on a non-amenable group in his proof of the transience of the random  walk (see page 213  where he states: "we notice in connection with the first corollary that what has been shown is
that if G is nonamenable and the support of $\mu$ generates G then there exists a nontrivial
$\mu$-boundary"). Non-triviality of the Poisson boundary indeed implies transience, but the latter property is much weaker. We mention that a finitely generated infinite group is transient unless it has  a finite index subgroup isomorphic to $\mathbb{Z}$ 
or $\mathbb{Z}^2$. However, there are a vast variety of groups admitting (irreducible) measures with trivial boundary: all groups of subexponential growth, but also some groups of exponential growth; for example, the  already mentioned  two dimensional lamplighter, as well as the one-dimensional Baumslag groups or the two dimensional torsion-by-Abelian Baumslag group $B_d(\mathbb{Z}/p\mathbb{Z})$.

\section{Cube independence and estimates of  $\Delta$-restriction entropy}

While all amenable groups are known to admit non-degenerate measures with a trivial Poisson boundary \cites{kaimanovichvershik, rosenblatt74}, for many amenable groups such measures can not be chosen to have finite entropy.
This is the case for $d\ge 3$ dimensional wreath products
\cite{erschlerliouv} and more generally for all linear groups of characteristic $p$ admitting simple random walks with non-trivial boundary \cite{erschlerfrisch}.
Non-triviality of the boundary can be seen in this case by showing that asymptotic entropy grows linearly, and the latter by providing a lower bound for $\Delta$-restriction entropy.
First, we recall this notion (Definition 3.10 in \cite{erschlerfrisch}):

\begin{definition}[$\Delta$-restriction  entropy]\label{def:deltarestrictionentropy}
Given a group $G$, a probability measure $\mu$ on $G$ and
a finite set $\Delta 
\subset \supp (\mu)$,
we define the $\Delta$-restriction  entropy
$H_{\Delta}(n)$
as follows.
We consider an $n$-step trajectory $X_n$ of $(G,\mu)$.
Then we take the conditional entropy of $X_n$, after
conditioning on all increments except those that are in $\Delta$.
\end{definition}

Now we explain how the notion of cube independence can be used to obtain
lower bounds for $\Delta$-restriction entropy.

\begin{definition}\label{def:cubeind}
We say that a sequence of elements $\gamma_1$, $\gamma_2$ \dots $\gamma_n \in G$ is cube independent if the elements below are pairwise distinct
$$
\gamma_1^{\varepsilon_1} \gamma_2^{\varepsilon_2} \dots \gamma_n^{\varepsilon_n } \,
$$
where $\varepsilon_i =1$ or $0$. 
\end{definition}

In the following definition, we introduce a convention of how to speak about the same images of a mapping, which is not necessarily a group 
homomorphism.

\begin{definition}[Same image under $\pi$]
 Given 
 a subset $S\subset G$ and
 a subgroup $H$ such that  $ S\subset H\subset G$, a mapping $\pi:S \to X$ and elements 
 $\delta_1,  \delta_2 \in S$. We say that $\delta_1$
 and $\delta_2$ have ``the same image under $\pi$":
 if $\pi$ factors through a group homomorphism  which sends $\delta_1$ and $\delta_2$ to the same element, in other words, there exists a group $H'$, a group homomorphism $\pi': H \to H'$ and a mapping $\pi" :H' \to X$ such that
 $\pi = \pi" (\pi')$ and $\pi'(\delta_1) =\pi'(\delta_2)$.
 \end{definition}

 Given a group $G$ and a subset $\Delta$ of cardinality two, we say that a subset $S$ is adapted to  
 $\Delta$ if $S$ is the preimage of a subset $S'$ of the quotient $G/ \langle \delta_1\delta_2^{-1} \rangle$, where $\langle \delta_1\delta_2^{-1} \rangle$ denotes the normal subgroup generated by $\delta_1\delta_2^{-1}$.

 If $X$ in the definition above is a group and $\pi$ is a group homomorphism, then the definition above is the same as saying $\pi(\delta_1)=\pi(\delta_2)$.

\begin{definition}[Cube independent property along the image $\pi(S)$]
Take a two element set $\Delta =\{\delta_1, \delta_2)$,  assume that these two elements
have the same image under $\pi$ and that the set $S$ is adapted to $\Delta$.
 We say that $\Delta$ has the cube independent property along 
 the image of $\pi(S)$ if 
 for any $h_1$, $h_2$, 
 \dots $h_k$ 
 such that $\pi(h_1 \delta_{1} \dots h_j \delta_{1})$ 
 are distinct for $1\le j \le k$ the following $2^k$
 elements
 $$
 h_1 \delta_{i_1} h_2 \delta_{i_2} \dots h_k \delta_{i_k}
 $$
 where $i_s = 1$ or $2$ for all $1 \le s \le k$, are distinct elements of $S$.
\end{definition}

\begin{remark} \label{rem:cubeindependent}
Let $\pi$ be a map, and let $\delta_1$ and $\delta_2$ have the same image under $\pi$. Set $\bar{\delta} =\delta_1^{-1} \delta_2$. 

Set $r_1=h_1 \delta_1$, $r_2 = h_1 \delta_1 h_2 \delta_1$, \dots , 
$r_k= h_1 \delta_1 h_2 \delta_1 \dots h_k \delta_1$. 
The cube independent property along the image $\pi(S)$ 
says that for any $h_1, h_2, ... , h_k$ above so that $\pi(r_1), \pi(r_2),... \pi(r_k)$ are distinct, we have that 
$$h_1 \delta_{i_1} h_2 \delta_{i_2} ... h_k \delta_{i_k}$$ are pairwise distinct for $(i_1,...,i_k) \in \{1, 2\}^k$.

Setting $\gamma_i = r_i \bar{\delta} r_i^{-1}$, the usual cube independence property
from Definition \ref{def:cubeind}
says that 

$$\gamma_1^{\varepsilon_1} ... \gamma_k^{\varepsilon_k}$$
are pairwise distinct for $(\epsilon_1,...,\epsilon_k) \in \{0, 1 \}^k$.

These two properties are equivalent for any given $h_1,...,h_k$ because

$$ \gamma_1^{\varepsilon_1} ... \gamma_k^{\varepsilon_k}r_k = h_1 \delta_{i_1}...h_k \delta_{i_k}$$

for $i_j = \epsilon_j + 1$.

Thus the cube independent property along the image $\pi(S)$ is equivalent to the statement that for any $r_1,... r_k \in S$ with distinct images under $\pi$, the conjugates $\gamma_i = r_i \bar{\delta} r_i^{-1}$ of $\bar{\delta}$ satisfy the usual cube independence property.

\end{remark}

We recall that a subset $S$ of a finitely generated group is (left) syndetic if some finite neighborhood of $S$ in the left invariant
Cayley graph of $G$ is equal to $G$. 
(See e.g. \cite{cellularautomatabook}; the notion of syndeticity in his definition (3.39) corresponds to right syndecicity; our convention is to consider left invariant Cayley graphs, so that we will need the notion of left syndeticity).  It is clear that a subgroup is left syndetic if and only if it has a finite index in $G.$

\begin{example}[Wreath products (Lamplighters), cube independence along $\pi$]
$G:A\wr B$, $H=G$. Consider the group homomorphism $\pi: G \to A$. Let $
\Delta=\{\delta_1, \delta_2\}$. 
\begin{itemize}

\item Set $\delta_1=e$ and $\delta_2 =\delta$ ("the lamp").
Then $\Delta$ is cube independent along the image $\pi(G)$.

\item If $A=\mathbb{Z}^d$ (or any other orderable group) and we have two elements $\delta_1$  and $\delta_2$
with the same projection
$\pi(\delta_1)= \pi(\delta_2)$ to $A$. Then $\Delta$ is cube independent along $\pi$.

\item In general, $\Delta$ is not necessarily cube independent over the image of $\pi$. An example $A$ is a finite group
(or any other group with torsion), and $B=\mathbb{Z}/2 \mathbb{Z}$. $\delta_1 =e$ and $\delta_2$ has trivial projection to $A$ and consists of a configuration of 2 lamps. 

\item For any $A$  and $\delta_1, 
\delta_2 \in G$ such that $\pi(\delta_1)=\pi(\delta_2)]$
there exists a left syndetic subset $S\subset G$
such that  $\Delta=\{\delta_1, \delta_2\}$ is cube independent along 
$\pi(S))$.

\end{itemize}
\end{example}

\begin{proof}

To prove the first claim, observe that $r_i \delta r^{-1}_i$ corresponds in the normal form to the lamp at $\pi(r_i)$. 
Taking elements with different projections to $A$, we see that $r_i \delta r_i^{-1}$ generates an Abelian group, which is the product of the cyclic groups generated by each $r_i \delta r_i^{-1}$.

To prove the second claim, fix some order on the group $A$. For example, if $A=\mathbb{Z}^d$ we can take a lexicographic order. Let $\delta' =\delta_1^{-1} \delta_2 \ne e$.
Let us show that if $r_i$ have distinct projection to $A$, then $r_i \delta' r_i^{-1}$ are cube independent.
Take maximal $r_i$ with respect to our order and assume that in exactly one of the products
we have $r_i \delta' r_i^{-1}$. In other words, in the Abelian group, it corresponds to $\delta'$ placed at $r_i$. Consider the support of $\delta'$ and let $l$ be the maximal element of this support. Observe that in the product of terms $(r_i \delta' r_i^{-1})^{\epsilon_i}$, where $\epsilon_i \in \{0, 1\}$ the maximal position where a lamp is lit is at $r_i + l$ exactly if $\epsilon_i = 1$.
Now we can remove this term from the product and repeat the argument using the next largest $r_i$ to find every $\epsilon_i$. 

To prove the third claim observe that the cube independence would imply that the product of shifts of $\delta'$ over all elements of $A$ is not $e$. But the value at each element of the support is $1+1=0$ mod $\mathbb{Z}/\mathbb{Z}$.

Finally, fix a generating set of $A$.
To prove the fourth claim take $S$ such that the distance between two points of $S$ is $>L$, where $L$ is the maximal length of the support of $\delta'$.
\end{proof}

Take $A$ to be a group without finite index subgroups, which contains torsion elements. For example, we can take $A$ to be a Tarski monster: an infinite finitely generated group such that any proper subgroup is cyclic of order $p$. (The constructions of such groups is due to A.Yu.Olshansky \cite{olsh}). In the lamplighter group for $A$ we consider the standard two point set $\Delta=\{e, \delta_2 \}$ as in the third claim above and consider the projection $\pi$ to $A$.
By the third claim, we know that since $A$ has torsion elements, we do not have the cube independence property for the set $\Delta$ along the image $\pi(A)$.
Since $A$ does not have non proper finite index subgroups, there is no finite index subgroup $H$ where $\Delta$ might have a cube independence property along $\pi(H)$.
This shows that even in the case of lamplighters, it can be important to consider subsets (in this case syndeticity would work) and not only subgroups to ensure cube independence
along the image of the subset.
We will later discuss this context and its application for entropy estimates in the Lemma \ref{lemme:corelemma}.

\begin{lemma} \label{ex:baumslagci} [Baumslag groups, cube independence along $\pi$]
$G =B_d(R)$, where $R= \mathbb{Z}/p\mathbb{Z}$ or $R=\mathbb{Z}$.
Consider the standard homomorphism 
$$
\pi: B_d(R) \to \mathbb{Z}^d
$$
sending the matrix with a lower right entry $\prod_{i=1}^d X_i^{\alpha_i}  \prod_{j=1}^{d} (1+X_j)^{\beta_j} $ to $ \left( \alpha_1, \alpha_2, \dots , \alpha_d \right) $.
Let $\Delta=\{\delta_1, \delta_2\}$ where $\delta_1$ and  $\delta_2$ have the same projection under $\pi$.
Then $\Delta$ is cube independent along  the image $\pi(G)$.
\end{lemma}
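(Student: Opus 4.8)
The plan is to pass to the additive/unipotent picture of $B_d(R)$ and reduce the cube independence statement to a statement about Laurent polynomials over $R$, exploiting that $R = \mathbb{Z}$ or $R = \mathbb{Z}/p\mathbb{Z}$ has no additive torsion issues of the kind that broke the lamplighter-over-a-torsion-group example. Concretely, by Remark \ref{rem:cubeindependent} it suffices to show: for $\bar\delta = \delta_1^{-1}\delta_2$ (a unipotent element, since $\delta_1,\delta_2$ have the same image under $\pi$, hence the same lower-right monomial, so $\delta_1^{-1}\delta_2$ is upper unitriangular), and for any $r_1,\dots,r_k \in G$ whose images $\pi(r_1),\dots,\pi(r_k) \in \mathbb{Z}^d$ are pairwise distinct, the conjugates $\gamma_i = r_i \bar\delta\, r_i^{-1}$ satisfy the usual cube independence property. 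Using the normal form of Remark \ref{rem:normalformgkI}, write $\bar\delta$ as the matrix with $1$ on the diagonal and some fixed nonzero Laurent polynomial $q = q(Y_1,\dots,Y_d,Y_1+1,\dots,Y_d+1)$ (more precisely a Laurent polynomial in the $Y_i$ and $(Y_i+1)^{-1}$, living in the appropriate localized ring) in the upper-right corner; conjugating by $r_i$ whose lower-right monomial is $m_i := \prod Y_j^{\alpha_{i,j}}\prod(Y_j+1)^{\beta_{i,j}}$ multiplies the upper-right entry by $m_i$. So $\gamma_i$ has upper-right entry $m_i q$, and since the $\gamma_i$ are all unipotent and commute (the unipotent subgroup is abelian), the product $\gamma_1^{\varepsilon_1}\cdots\gamma_k^{\varepsilon_k}$ has upper-right entry $\left(\sum_{i=1}^k \varepsilon_i m_i\right) q$.

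Cube independence of the $\gamma_i$ is therefore equivalent to: the $2^k$ elements $\sum_{i=1}^k \varepsilon_i m_i$, $\varepsilon \in \{0,1\}^k$, are pairwise distinct in the ring $R[Y_j^{\pm 1},(Y_j+1)^{\pm 1}]$ — equivalently, that the monomials $m_1,\dots,m_k$ are $R$-linearly independent "in the $0,\pm 1$ sense," i.e. no nontrivial combination $\sum \eta_i m_i$ with $\eta_i \in \{-1,0,1\}$ vanishes. First I would observe that the $m_i$ are \emph{distinct monomials} in the polynomial ring $R[Y_1,\dots,Y_d]$ after clearing denominators: distinctness follows because $\pi(r_i)$ records the exponent vector $(\alpha_{i,1},\dots,\alpha_{i,d})$ of the $Y_j$'s, and $\pi(r_i)$ are pairwise distinct by hypothesis — so even just the $Y$-part of the monomial already distinguishes the $m_i$ (the $\beta$'s don't matter). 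Distinct monomials in a polynomial ring over any nonzero ring are linearly independent over that ring, so in particular no $\{-1,0,1\}$-combination of distinct $m_i$ vanishes (this uses that $R$ has characteristic $0$ or $p$ but with coefficients restricted to $\{-1,0,1\}$ the coefficient $1 \neq 0$ in any such $R$; more simply, distinct monomials are a free family). Hence the $\sum \varepsilon_i m_i$ are distinct, the $\gamma_i$ are cube independent, and by Remark \ref{rem:cubeindependent} $\Delta$ has the cube independent property along $\pi(G)$, which — since we took $S = G$, the preimage of all of $\mathbb{Z}^d$ under $\pi$, and $G$ is adapted to $\Delta$ — is exactly the asserted conclusion.

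The only genuine subtlety — the step I'd be most careful about — is the passage to the polynomial ring and the precise ring in which $\bar\delta$ and the $\gamma_i$ live: one must check that clearing the $(Y_j+1)^{-1}$ denominators doesn't collide monomials, i.e. that multiplying each $m_i$ by a common denominator $D = \prod_j (Y_j+1)^{N_j}$ keeps them distinct (it does, since multiplication by a fixed monomial is injective on monomials) and preserves the $Y$-exponent bookkeeping that $\pi$ sees (the $Y_j$-exponents are unaffected by $D$ since $D$ is a polynomial in $(Y_j+1)$, but to be safe one works with the $Y_j$-degree in a specific variable or uses that $\pi$ remembers the $\alpha$-vector before any clearing). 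Beyond that, the argument is routine: everything reduces to the triviality that distinct monomials over a nonzero commutative ring are linearly independent, which is exactly the feature that fails in the Tarski-monster lamplighter example (there the "coefficient ring" is the torsion group $A$, and $1+1 = 0$), and is what makes the Baumslag case work. I would also state explicitly at the start that $\delta_1^{-1}\delta_2$ is unipotent and that the unipotent subgroup of $B_d(R)$ is abelian (Remark \ref{rem:normalformgkI}), since the commuting of the $\gamma_i$ is what lets us collapse the product to a single linear combination in the upper-right corner.
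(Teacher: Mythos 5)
Your reduction is the right one and matches the paper's setup: pass to $\bar\delta=\delta_1^{-1}\delta_2$, which is unipotent with nonzero upper-right entry $q$, use that the unipotent subgroup is abelian so the product $\gamma_1^{\varepsilon_1}\cdots\gamma_k^{\varepsilon_k}$ has upper-right entry $\bigl(\sum_i\varepsilon_i m_i\bigr)q$ with $m_i=\prod_j Y_j^{\alpha_{i,j}}\prod_j(Y_j+1)^{\beta_{i,j}}$, and (since the ring is a domain) reduce cube independence to showing that no combination $\sum_i\eta_i m_i$ with $\eta_i\in\{-1,0,1\}$, not all zero, vanishes. Up to that point you agree with the paper.

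The gap is in the final step. The elements $m_i$ are \emph{not} monomials of $R[Y_1,\dots,Y_d]$, and clearing denominators does not make them so: $(Y_j+1)$ is not an independent variable but the polynomial $Y_j+1$, so $m_i$ (or $m_iD$) is a genuine Laurent polynomial with many nonzero coefficients when written in the $Y$-monomial basis. The principle ``distinct monomials over a nonzero ring are linearly independent'' therefore does not apply; in fact the ring $R[Y_j^{\pm1},(Y_j+1)^{\pm1}]$ is a proper quotient of the $2d$-variable Laurent polynomial ring (this is exactly what distinguishes $B_d(R)$ from the lamplighter $\mathbb{Z}^{2d}\wr R$), and there are honest relations among such ``monomials,'' e.g.\ $(Y+1)-Y-1=0$. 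That particular relation is consistent with the lemma only because two of its three terms share the same $\alpha$-vector, which shows that the independence you need genuinely uses the hypothesis that the $\alpha_i$ are pairwise distinct and cannot be dismissed as the triviality you invoke. The paper closes this gap as follows: first normalize so that all $\beta_{i,j}\ge 0$ by conjugating everything by a large positive power of $\prod_j(Y_j+1)$ (conjugation preserves cube independence); then observe that, expanded in the $Y$-monomial basis, $\prod_j Y_j^{\alpha_{i,j}}(Y_j+1)^{\beta_{i,j}}$ has the monomial $\prod_j Y_j^{\alpha_{i,j}}$ with coefficient $1$ and no monomial that is lexicographically smaller than $(\alpha_{i,1},\dots,\alpha_{i,d})$. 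Ordering the $r_i$ lexicographically by $\alpha_i$ and peeling off leading terms one by one then recovers each $\eta_i$ from $\sum_i\eta_i m_i$, proving the required independence. Your argument needs this leading-term (triangularity) step; as written, the crucial assertion rests on a false premise.
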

\begin{proof}
Consider $\delta'= \delta_1^{-1}\delta_2$. Let us prove the cube independence of $\{e,\delta'\}$ along the image $\pi(G)$.
Take several elements $r_j$ with distinct projections to $\mathbb{Z}^d$
with respect to $\pi$. Consider their projection to $\mathbb{Z}^{2d}$ with respect to
$\phi$. (See the remark after Definition 2.4 for the definition of these homorphisms). Observe that the conjugates of $\delta$ by $g$ depend on $\phi(g)$ only. 

Observe that without loss of generality, we can assume that $\phi(r_i) \subset \mathbb{Z}^d \times \mathbb{Z}_+^d$. Indeed, otherwise we conjugate all $\phi(r_i)$ by a large enough positive power of $\prod_i (Y_i+1)$ so that the exponent of each $(Y_i + 1)$ is positive.  Given $f\in G$, it is clear that $\gamma_i$ are cube independent if and only if $f\gamma_if^{-1}$ are cube independent.

Now observe that $\prod Y_i^{\alpha_i}(1+Y_i)^{\beta_i}$, $\beta_i \ge 0$ has a non-zero coefficient for the  monomial
$\prod Y_i^{\alpha_i}$ and  has zero coefficients for the monomials $\prod Y_i^{\alpha'_i}$ where $(\alpha'_1, \dots , \alpha'_i)$ is smaller in the lexicographical order than  $(\alpha_1, \dots , \alpha_i)$.
We order $r_i$ in this lexicographical order.
If $\delta'=\delta$ we can look at each product in the definition of the cube independence and check whether each monomial is nonzero. If it is nonzero we know can subtract out the corresponding polynomial and continue checking monomials, to see exactly which conjugates have exponent 1. Therefore we do have the cube independence for the conjugates of $r_i$.

In the general case, having $\delta' \ne e$ with $\pi(\delta')=e$, we argue in a similar way as in the second claim of the Lamplighter example.
We consider the maximal element
in the support of $\delta'$ in the lexicographical order on $\mathbb{Z}^d$.

\end{proof}

For our proof of Lemma \ref{lemme:corelemma} it will be helpful to observe the following.

\begin{remark} \label{lem:semigroupBaum}
Let $R=\mathbb{Z}$ or $R=\mathbb{Z}/p\mathbb{Z}$, $d\ge 1$.
Let $S^+$ be a sub-semigroup of $B_d(R)$, such that the group generated by $S^+$ is not Abelian and such that 
$\pi(S^+)$ generates $\mathbb{Z}^d$ as a group. Then there exist $\delta_1 \ne \delta_2 \in S^+$ such that $\pi(\delta_1)=\pi(\delta_2)$.  
\end{remark}
\begin{proof}
Choose $s$ and $s'$ to be non-commuting elements of $S^+$.
Then $ss'\ne s's$,
$ss'$ and $s's$ are in $S^+$ and $\pi(ss')=\pi(s)+\pi(s')=\pi(s's)$. So we can put $\delta_1= ss'$ and $\delta_2=s's$.
\end{proof}

In the lemma above and the remark above we use the notation $R$ for the Abelian group (and elsewhere we usually use this notation for the corresponding ring).

For our proof of Theorem A, it is essential that the torsion-free Baumslag group also admits a homomorphism to $\mathbb{Z}^{d+1}$ (and not only $\mathbb{Z}^d$ where we will have the  cube independence of $\Delta$
along the image of $\pi$. This will be proven in Proposition $\ref{prop:ciBaumslag}$.
In contrast to the example above, the cube independence will be claimed not for $\pi(G)$, but for $\pi(H)$, where
$H$ is a finite index of subgroup $G$.

One can show that  for $R= \mathbb{Z}/p \mathbb{Z}$ there is no homomorphism  $\pi$ from $G$
(or from a finite index subgroup of $G$) to 
$\mathbb{Z}^{d+1}$  so that $\Delta$ is cube independent along $\pi$.

However, for our argument in the proof of theorem A we will use a group homomorphism $\pi$ from a finite index subgroup of $G= B_d(\mathbb{Z})$ to $
\mathbb{Z}^{d+1}$.

\begin{lemma}[Lower estimates for $\Delta$-restriction entropy] \label{lemme:corelemma}
\begin{enumerate}

\item
We assume that $S\subset G$, $\pi:S \to X$ and a two point set 
$\Delta$ whose elements have  the same projection under $\pi$ are such that $\Delta$ has a cube independent property along $\pi(S)$.
Let $\mu$ be a probability measure on $G$ such that $\Delta$ belongs to the support of $\mu$. 

For a trajectory of our random walk $X_i$, we consider the number of distinct points in the set $\{\pi(X_i)\}_{i=1}^n$,  where $X_i \in S$. If the probability that  this number, defined by a randomly chosen trajectory, is linear in $n$, is positive, 
then the $\Delta$ restriction entropy is positive.

\item In particular, the assumption (and thus the claim) of the Lemma is satisfied  if $\pi: S \to \mathbb{Z}^3$ is a surjective group homomorphism,
 $(G,\mu)$ is an irreducible random walk, $S$ is a finite index subgroup of $G$, $\Delta =\{\delta_1, \delta_2\}$,
 $\delta_1\ne \delta_2$, $\pi(\delta_1)=\pi(\delta_2)$
and  there is a cube independent property for $\Delta$  along the image $\pi(S)$.

The same holds more generally if $G$ admits  a surjective homomorphism $\pi: S \to A$, $A$ is an infinite finitely generated group which is not virtually $\mathbb{Z}$
or $\mathbb{Z}^2$, $S$ is a syndetic subset of $A$, 
$(A,\pi(\mu))$ is irreducible and
there is the cube independent property for $\Delta$  along the image $\pi(S)$.
\end{enumerate}

\end{lemma}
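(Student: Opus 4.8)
The plan is to reduce the statement to a statement about the linear growth of the number of distinct values $\pi(X_i)$ along a random trajectory, and then to invoke the entropy criterion via the $\Delta$-restriction entropy. First I would set up the conditional entropy computation underlying part (1): conditioning on all increments of the random walk outside $\Delta$, the remaining randomness consists of the choices $\varepsilon_i\in\{0,1\}$ (whether the $i$-th increment, when it lies in the relevant class, equals $\delta_1$ or $\delta_2$). Using Remark \ref{rem:cubeindependent}, for a fixed trajectory skeleton the relevant partial products $r_j$ are determined, and the cube independent property along $\pi(S)$ guarantees that whenever the images $\pi(r_j)$ are pairwise distinct, the $2^k$ possible endpoints $h_1\delta_{i_1}\cdots h_k\delta_{i_k}$ are pairwise distinct. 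Hence on the event that the trajectory visits $\ell$ distinct values of $\pi$ inside $S$, the conditional distribution of $X_n$ (given the complementary increments) is supported on at least $2^{\ell'}$ points for some $\ell'$ comparable to $\ell$ (one loses at most a constant factor and an additive term from increments not landing in $\Delta$, which is controlled since $\Delta\subset\supp\mu$ so $\delta_1,\delta_2$ each occur a linear number of times with high probability). This gives $H_\Delta(n)\ge c\cdot\mathbb{E}[\#\{\text{distinct }\pi(X_i): X_i\in S\}]$ up to lower-order corrections; if the distinct-count is linear in $n$ with positive probability, then $H_\Delta(n)$ — and hence $H(n)$ — grows linearly, so by the entropy criterion the boundary is non-trivial. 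The main obstacle in part (1) is making the bookkeeping honest: the increments that are \emph{not} in $\Delta$ still move the walk, and one must argue that conditioning on them and then revealing only the $\Delta$-choices still produces $2^{\Theta(\ell)}$ distinct endpoints; this is exactly what the "adapted to $\Delta$" hypothesis and the cube independence are designed to handle, since quotienting by $\langle\delta_1\delta_2^{-1}\rangle$ kills precisely the ambiguity, and the cube independent property reinstates full distinguishability along generic trajectories.

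For part (2) I would verify that the stated special hypotheses imply the hypothesis of part (1), namely that with positive probability the number of distinct values of $\pi(X_i)$ with $X_i\in S$ is linear in $n$. When $S$ is a finite-index subgroup and $\pi:S\to A$ is a surjective homomorphism, the random walk $(G,\mu)$ spends a positive density of time in $S$ (standard: the return times to a finite-index subgroup have finite expectation, or one passes to the induced walk on $S$), and the images $\pi(X_i)$ for $X_i\in S$ perform (a time-changed version of) an irreducible random walk on $A$. So it suffices to know that an irreducible random walk on an infinite, finitely generated group $A$ which is not virtually $\mathbb{Z}$ or $\mathbb{Z}^2$ visits a linear number of distinct sites with positive probability — equivalently, that its range $|R_n|$ grows linearly along a positive-probability set of trajectories. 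For $A=\mathbb{Z}^3$ this is classical (transient walk, range $\sim c n$ a.s.); for general such $A$ it follows from the fact that these are exactly the groups on which simple (and irreducible finite-range) random walks are transient with at least cubic-type volume growth or are non-amenable, so that the expected range is linear — one can cite the transience dichotomy recalled in Section \ref{sec:prelim} (a finitely generated infinite group is recurrent only if it is virtually $\mathbb{Z}$ or $\mathbb{Z}^2$), together with the standard fact that transient random walks have $\mathbb{E}|R_n|\ge c n$ and concentration/positive-probability linear lower bounds. The syndetic version is handled the same way: syndeticity of $S$ in $A$ ensures a positive density of the $\pi$-trajectory lies in $S$, and distinct sites of the ambient transient walk that are $L$-separated project to distinct sites, so the count inside $S$ is still linear with positive probability.

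I expect the genuinely delicate point to be the passage, in part (1), from "many distinct $\pi$-values visited" to "many distinct conditional endpoints", because the walk's non-$\Delta$ increments are arbitrary group elements and the map $\pi$ need not be a homomorphism on all of $G$; the argument must stay inside $S$ and use that $S$ is the preimage of a subset of $G/\langle\delta_1\delta_2^{-1}\rangle$, so that flipping a single $\varepsilon_i$ changes the endpoint by a conjugate $\gamma_i=r_i\bar\delta r_i^{-1}$ of $\bar\delta=\delta_1^{-1}\delta_2$, and the cube-independence of the $\gamma_i$ (Remark \ref{rem:cubeindependent}) gives injectivity of $(\varepsilon_1,\dots,\varepsilon_k)\mapsto\gamma_1^{\varepsilon_1}\cdots\gamma_k^{\varepsilon_k}r_k$. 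Everything else — the density-of-time-in-$S$ estimates, the linear range of transient walks, the invocation of the entropy criterion — is either standard or already recorded in the excerpt, so the proof should be short modulo this combinatorial core.
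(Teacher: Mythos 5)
Your proposal is correct and follows essentially the same route as the paper: establish that with positive probability a linear number of the "new $\pi$-site" times carry an increment from $\Delta$, use the cube independence along $\pi(S)$ (via Remark \ref{rem:cubeindependent}) to see that the $2^k$ resolutions of the $\Delta$-increments give distinct endpoints, deduce a linear lower bound on the conditional entropy, and for part (2) invoke transience of irreducible walks on groups not virtually trivial, $\mathbb{Z}$, or $\mathbb{Z}^2$ together with the linearity of the range of transient walks. The combinatorial core you flag as delicate is exactly the one the paper handles, and in the same way.
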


\begin{proof}
We know that  there exist  constants $C,p>0$ such that  with probability $p$
the number of distinct points among 
$\phi(X_i)$ is at least $Cn$.

In this case there exist $C', p'>0$ such that
the number of distinct
$\phi(X_i)$, $1 \ne i \le n$ with the increment $Y_{i+1}$ in $\Delta$, is $\ge C' n$ 
with probability $p'$. Call the claim in the previous sentence (*).
Indeed, observe that if we know the trajectory up to a time instant $k$, we know that with positive probability the next step is obtained by multiplication by an element in $\Delta$.

Let $\chi_i$ be the event that the image of $X_i$ under $\phi$ visits a new point at time instant $i$ and let $\tilde{\chi_i}$
be the event, that the image of $X_i$ under $\phi$ visits a new point at time instant $i$ and the increment $Y_{i+1}$ is in $\Delta$. Then the expectation
$$
E[\tilde{\chi_i}] = \mu(\Delta) E[\chi_i].
$$
Therefore
$$
\sum_{i=1}^n[\tilde{\chi_i}] = \mu(\Delta) \sum_{i=1}^n E[\chi_i].
$$
And by our  assumption, the latter is at least $pCn$. So (*) is satisfied.

The rest of our argument is similar to  the proof of Theorem 2.1 \cite{erschlerliouv}.
Given a random walk $(G,\mu)$, we fix two elements $\delta_1$ and $\delta_2$ in the support of $\mu$.
Let $(x_1,...,x_n)$ be the increments of a trajectory of our random walk. We assume that $1 \le i_1 < i_2 < \dots <i_k \le n$. We choose a set $w$ of $n-k$ elements of $G$:
$w=(x_1, x_2, \dots \hat{x}_{i_1}, \dots, \hat{x}_{i_j}, \dots x_n)$.
Here we use the convention that the hat over an element indicates its absence. 
Given $(w, i_1, \dots, i_k)$, we consider the set of trajectories of length $n$, denoted $T^{\delta_1,\delta_2}(w, i_1, \dots, i_k)$, where the increment at time $s$ is equal to $x_s$ unless $s$ is equal to some $i_j$, and that the increment at any time $i_j$ is either $\delta_1$ or $\delta_2$.
We also assume that $x_s$ are in the support of $\mu$
for all $s:1\le s \le n$.

We fix some constant $c,p>0$. 
Observe that the condition (*) and the cube independence assumption of the Lemma (cube independence) implies that the trajectory of $X_n$ admits a tuple of indices $A_n = (i_1, \dots, i_k)$ with the following properties 

a) the number of indices $k$ in $(i_1,...,i_k)$ satisfies $k\ge cn$ with probability $p$.

b) For any distinct $(\varepsilon_1, ..., \varepsilon_k) \in \{1, 2 \}^k$, the trajectories in $T^{\delta_1,\delta_2}(w, i_1, \dots, i_k)$ with increment $\delta_{\varepsilon_j}$ at step $i_j$ hit distinct endpoints at time instant $n$. (the number of such trajectories, and hence the number of endpoints is $2^k$).

Choose the indices $A_n = (i_1,...,i_k)$ by the following algorithm. For any $i<n$ if the event $\tilde{\chi_i}$ occurs, then the index $i$ is in $A_n$ otherwise it is not. From (*) it is immediate that property a) is satisfied. The assumption that $\Delta$ has the cube independent property along $\pi(S)$ immediately shows that property b) is satisfied.

We want to show that if all trajectories of length $n$ admit a tuple of indices $A_n$ satisfying a) and b), then the asymptotic entropy of the random walk $(G,\mu)$ is linear $n$. Moreover, we want to show the linear lower bound on the $\Delta$-restriction entropy at time $n$ for the two point set $\Delta = \{\delta_1, \delta_2\}$. Consider the probability measure $\nu$ on this set such that $\nu(\delta_1)= \mu(\delta_1)/\left(\mu(\delta_1) + \mu(\delta_2)\right)$ and $\nu(\delta_2)= \mu(\delta_2)/\left(\mu(\delta_1) + \mu(\delta_2)\right)$.

Our  assumptions show that the measure of the random walk $X_n$ after conditioning on its trajectory lying in the set $T^{\delta_1,\delta_2}(w, i_1, \dots, i_k)$ is a product measure $\nu^k$, and hence that 
the conditional entropy is  $k H(\nu(a,b))$. We also know that with positive probability $k \ge Cn$.
Using the fact the mean conditional entropy is not greater than the entropy, we obtain the first claim of the Lemma.

To prove 2) observe that a range of a transient random walk on $\mathbb{Z}^d$ is linear (see e.g Theorem 1.4.1 Spitzer [54]). In fact, the range of a transient random walk is linear for any random walk on a group (see Lemma 1 \cite{erschlerwreath}) and by a result of Varopoulos \cite{varopoulos} (see \cite{woessbook}, see also Appendix in \cite{erschlerfrisch} for further references)
any irreducible random walk on a group, which is not virtually $\mathbb{Z}$, $\mathbb{Z}^2$ or finite, is transient.
\end{proof}

\begin{corollary}
Let $d\ge 3$, let $\mu$ be a  finite entropy probability measure on $B_d(R)$, $R =\mathbb{Z}$ or $R =\mathbb{Z}/p\mathbb{Z}$, and let $S^+$ be a sub-semigroup of $B_d(R)$ generated by the support of $\mu$. Assume that 
$\pi(S^+)$ generates $\mathbb{Z}^d$ as a group.
Then the Poisson boundary of ($B_d(R),\mu)$ is non-trivial. In particular, any irreducible random walk on $B_d(R)$ or on $\mathbb{Z}^d \wr R$, has non-trivial Poisson boundary.
\end{corollary}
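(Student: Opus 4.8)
The plan is to assemble the corollary from three ingredients already at our disposal: the Entropy criterion, the linear lower bound for $\Delta$-restriction entropy in Lemma~\ref{lemme:corelemma}, and the cube independence of Lemma~\ref{ex:baumslagci}; the one technical preparation is to pass to a convolution power of $\mu$ so as to have a genuine two-element set sitting inside the support. Since the Poisson boundary of $(B_d(R),\mu)$ is read off from bounded harmonic functions on $\langle\supp\mu\rangle$ and, by the Entropy criterion, is non-trivial exactly when the asymptotic entropy $h(\mu)$ is positive, and since $h(\mu^{*m})=m\,h(\mu)$ for every $m\ge1$, it is enough to produce some $m$ for which $H\big((\mu^{*m})^{*n}\big)$ grows at least linearly in $n$.

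First I would extract the marked pair $\Delta$. The group generated by $S^+$ is $\langle\supp\mu\rangle$, and in the cases at hand it is non-Abelian: for an irreducible measure on $B_d(R)$ it equals $B_d(R)$, while $\Z^d\wr R$ is realized inside $B_d(R)$ as the subgroup $\langle\delta,M_{y_1},\dots,M_{y_d}\rangle$, on which $\pi$ restricts to the projection to the base $\Z^d$, and which is non-Abelian for $d\ge1$. Applying Remark~\ref{lem:semigroupBaum}, choose non-commuting $s,s'\in S^+$ and put $\delta_1=ss'$, $\delta_2=s's$; then $\delta_1\ne\delta_2$, both lie in $S^+$, and $\pi(\delta_1)=\pi(\delta_2)$ for the standard homomorphism $\pi\colon B_d(R)\to\Z^d$. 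If $s\in\supp\mu^{*a}$ and $s'\in\supp\mu^{*b}$, set $m=a+b$, so that $\delta_1,\delta_2\in\supp\mu^{*m}$; note $\mu^{*m}$ has finite entropy by subadditivity. By Lemma~\ref{ex:baumslagci}, the two-point set $\Delta=\{\delta_1,\delta_2\}$ has the cube independent property along $\pi(B_d(R))$.

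Next I would check the range hypothesis of Lemma~\ref{lemme:corelemma} and conclude. The $\pi$-image of the $\mu^{*m}$-walk is the random walk on $\Z^d$ with step law $\nu=(\pi_*\mu)^{*m}$, i.e. the $m$-step subsampling of $(\Z^d,\pi_*\mu)$. Since $\pi(S^+)$ generates $\Z^d$, the walk $(\Z^d,\pi_*\mu)$ is irreducible, and for $d\ge3$ the group $\Z^d$ is infinite, not finite and not virtually $\Z$ or $\Z^2$, so it is transient by the result quoted in the proof of Lemma~\ref{lemme:corelemma}; as $\sum_n\nu^{*n}(e)=\sum_n(\pi_*\mu)^{*mn}(e)\le\sum_N(\pi_*\mu)^{*N}(e)<\infty$, the walk $(\Z^d,\nu)$ is transient as well, hence its range is linear in $n$. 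Now Lemma~\ref{lemme:corelemma}, part~(1), applied with $S=G=B_d(R)$, the measure $\mu^{*m}$, the homomorphism $\pi$ and the set $\Delta$, yields a linear lower bound for the $\Delta$-restriction entropy of $(B_d(R),\mu^{*m})$ at time $n$; being a conditional entropy it is at most $H\big((\mu^{*m})^{*n}\big)$, so $h(\mu^{*m})>0$, whence $h(\mu)>0$ and, by the Entropy criterion, the Poisson boundary of $(B_d(R),\mu)$ is non-trivial. For the ``in particular'' clause, any irreducible random walk on $B_d(R)$, or on $\Z^d\wr R$ viewed as the subgroup above, satisfies all the hypotheses used: the semigroup generated by its support spans a non-Abelian group whose image under $\pi$ is $\Z^d$.

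The conceptual work is carried by Lemma~\ref{lemme:corelemma} and Lemma~\ref{ex:baumslagci}, so the Corollary is mostly a matter of fitting their hypotheses; the two points that require a little care are (i) the passage to a convolution power so that $\Delta$ really sits inside the support — which is exactly where the non-Abelian input of Remark~\ref{lem:semigroupBaum} becomes essential — and (ii) the fact that transience of the $\pi$-image walk, and hence the linear range that feeds Lemma~\ref{lemme:corelemma}, is preserved under the $m$-step subsampling, which is the only place the dimension hypothesis $d\ge3$ enters.
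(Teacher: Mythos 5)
Your argument is correct and follows essentially the same route as the paper: Remark \ref{lem:semigroupBaum} supplies the pair $\delta_1=ss'$, $\delta_2=s's$ with equal $\pi$-image, Lemma \ref{ex:baumslagci} gives cube independence of $\Delta$ along $\pi(G)$, and Lemma \ref{lemme:corelemma} converts transience of the projected walk on $\mathbb{Z}^d$ (the only place $d\ge 3$ enters) into a linear lower bound on the $\Delta$-restriction entropy. The sole difference is cosmetic: the paper places $\Delta$ in the support by passing to an affine combination of convolution powers and invoking Kaimanovich's theorem that this preserves the boundary, whereas you pass to a single power $\mu^{*m}$ and use $h(\mu^{*m})=m\,h(\mu)$ together with the entropy criterion — both devices serve the same purpose and both are valid.
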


\begin{proof}

Now take a measure on the Baumslag group $G=B_d(R)$  ($R=\mathbb{Z}$ or $\mathbb{Z}/p\mathbb{Z})$ with the support generating $G$ as a subgroup. We know that the boundary of $(G,\mu)$
is canonically isomorphic to the boundary for a (non-atomic) affine combination $\nu$ of non-negative convolution powers $\mu$ (see \cite{kaimanovich83}, Theorem 4).
In particular, the boundary is non-trivial if and only if the latter boundary is non-trivial.
By Remark \ref{lem:semigroupBaum} we conclude that there exists $\nu$, which is an affine combination of convolution powers of $\mu$ such that $\nu$ satisfies  the assumption of Lemma 3.7. Combining the claim of that lemma with 
 the second claim of Lemma  \ref{lemme:corelemma} we see that $(G,\nu)$, and hence also $(G,\mu)$ has non-trivial Poisson boundary.
\end{proof}

\section{single polynomial case in three variables: $G_k(p)$ groups}

In this section, we will prove Theorem $B$.
As we have mentioned in the introduction, we say that  a Laurent polynomial $p$
in $d$ variables satisfies the spaced polynomial property if there exists some integer $N>0$ so that for any non-zero Laurent polynomial $u$ with coefficients in $0, \pm 1$, $p$ does not divide $u(x_1^N,\dots,x_d^N)$.

In the proof of Theorem B, we consider a  measure on our group $G_k(p)$, with the support of the measure generating the group. We remind the reader that the Poisson boundary does not change if we replace the measure by a non-trivial affine combination of convolution powers of the measure. In particular, we can assume that the resulting measure charges $e$.
Therefore, replacing the measure by such an affine combination of convolution powers, 
we can assume there exist $\delta_1$ and $\delta_2$ in the support of our measure,
with the same projection to $\mathbb{Z}^k$, and this projection has all coordinates divisible by $N$. 
Here $N$ is from the definition of spaced polynomial property (for the polynomial $p$).
We put $\Delta=\{\delta_1, \delta_2 \}$.
And our goal is to give a lower bound for $\Delta$-restriction entropy.

We start with a lemma.
\begin{lemma} \label{lem:monomialsdistinct} 
If $p$ has the spaced polynomial property, then any two distinct monomials $\prod_{i=1}^k x_i^{n_i}$ and $\prod_{i=1}^k x_i^{m_i}$ are not equal modulo $p$.  Here $n_1,...,n_k, m_1,...,m_k \in \mathbb{Z}$
\end{lemma}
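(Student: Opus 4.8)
The plan is to reduce the statement to the definition of the spaced polynomial property by a simple substitution argument. Suppose for contradiction that two distinct monomials $\prod_{i=1}^k x_i^{n_i}$ and $\prod_{i=1}^k x_i^{m_i}$ are equal modulo $p$; that is, $p$ divides the Laurent polynomial $\prod_{i=1}^k x_i^{n_i} - \prod_{i=1}^k x_i^{m_i}$, which is a nonzero binomial (nonzero because the exponent vectors $(n_i)$ and $(m_i)$ differ). Call this binomial $v$. The immediate obstruction is that $v$ has the form $x^{\mathbf n} - x^{\mathbf m}$ with arbitrary integer exponents, whereas the spaced polynomial property is phrased in terms of Laurent polynomials of the form $u(x_1^N,\dots,x_d^N)$ with coefficients in $\{0,\pm 1\}$. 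So the work is to manufacture, from the hypothetical divisibility $p \mid v$, a divisibility of the forbidden shape.

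First I would record that divisibility of Laurent polynomials is insensitive to multiplication by units (monomials): $p \mid v$ iff $p \mid (x^{\mathbf m'} v)$ for any monomial $x^{\mathbf m'}$. Multiplying $v = x^{\mathbf n} - x^{\mathbf m}$ by a suitable monomial, we may assume $v = x^{\mathbf a} - 1$ where $\mathbf a = \mathbf n - \mathbf m \neq 0$. Next, the key trick: let $N$ be the constant from the spaced polynomial property for $p$, and raise to the $N$-th power in the sense of the identity
\[
x^{N\mathbf a} - 1 = (x^{\mathbf a} - 1)\left(x^{(N-1)\mathbf a} + x^{(N-2)\mathbf a} + \cdots + x^{\mathbf a} + 1\right).
\]
Hence $p \mid (x^{\mathbf a}-1)$ forces $p \mid (x^{N\mathbf a} - 1)$. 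Now $x^{N\mathbf a} - 1 = u(x_1^N,\dots,x_k^N)$ where $u(y_1,\dots,y_k) = \prod_i y_i^{a_i} - 1$ is a nonzero Laurent polynomial with coefficients in $\{0,\pm 1\}$ (only the two coefficients $+1$ and $-1$ appear, on distinct monomials since $\mathbf a \neq 0$). This contradicts the spaced polynomial property, completing the proof.

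I expect the only subtle point to be bookkeeping with negative exponents: one must be a little careful that after clearing to the form $x^{\mathbf a}-1$ and then passing to $x^{N\mathbf a}-1$, the resulting polynomial really is literally $u(x_1^N,\dots,x_k^N)$ for a single $\{0,\pm1\}$-coefficient Laurent polynomial $u$ — but this is immediate since $x^{N\mathbf a} = (x_1^N)^{a_1}\cdots(x_k^N)^{a_k}$. An alternative that avoids even raising to the $N$-th power, if one prefers: apply the property directly after noting that $p$ and any monomial multiple generate the same ideal, and observe $x^{\mathbf a}-1$ is already $\{0,\pm1\}$-valued, so one needs the version of the property with $N=1$; but since the definition only guarantees nondivisibility of $u(x_1^N,\dots)$, the telescoping step above is the honest way to match the hypothesis. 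Either way, there is no real obstacle here — the lemma is essentially a restatement of the definition once one spots the factorization $x^{N\mathbf a}-1 = (x^{\mathbf a}-1)\sum_{j=0}^{N-1} x^{j\mathbf a}$.
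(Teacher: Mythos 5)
Your proof is correct and follows the same route as the paper: reduce the binomial to the form $r-1$ with $r$ a nontrivial Laurent monomial, observe that $r-1$ divides $r^N-1$, and note that $r^N-1$ is $u(x_1^N,\dots,x_k^N)$ for a nonzero $\{0,\pm1\}$-coefficient $u$, contradicting the spaced polynomial property.
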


\begin{proof} Seeking a contradiction assume the difference $\prod_{i=1}^k x_i^{n_i} - \prod_{i=1}^k x_i^{m_i}$ is divisible by $p$, then setting $r=\prod_{i = 1}^k x_i^{n_i - m_i}$ we see that $r-1$ is divisible by $p$. This means that $r^N-1$ is divisible by $q$ for any integer $N>0$ which contradicts the spaced polynomial property. 

\begin{definition}
We say that a Laurent polynomial $u$ is flat if the only nonzero coefficients are $\pm1$
\end{definition}

\end{proof}

Recall that  $G_k(p)$ is the group generated by the following matrices

\begin{equation}
\label{eq:defdelta}
   \delta=\left( \begin{array}{ccc}
1 & 1  \\
0 & 1
 \end{array} \right),
 \medspace \medspace
 M_{x_i}=
\left( \begin{array}{ccc}
1 & 0  \\
0 & X_i\\
 \end{array} \right).
\end{equation}
over the ring $R(X_1^{\pm 1},X_2^{\pm 1},...,X_k^{\pm 1})/I$,
where $p$ is a polynomial in $X_1$, $X_2$,...,$X_k$ and $I$ is the ideal
of $R(X_1^{\pm 1},X_2^{\pm 1},...,X_k^{\pm 1})$ generated by $p$.
In this section, we focus on the case $R=\mathbb{Z}$.

It is clear that Lemma 4.1 implies.

\begin{remark}
Assume that $p$ has the spaced polynomial property. Then the mapping 
that sends a matrix with lower right entry $\prod_{i=1}^k X_i^{\alpha_i}$ to $\left( \alpha_1, \alpha_2,...,\alpha_k \right)$
defines a homomorphism onto $\mathbb{Z}^k$. We denote this homomorphism by $\pi$.
\end{remark}

Now we prove

\begin{proposition}\label{prop:cubeindg3p}
Suppose that $p$ satisfies the spaced polynomial property with the constant
$N$.
Consider a homomorphism  $\pi:G_k(p) \to \mathbb{Z}^k$, and consider the preimage $H =\pi^{-1} ((N \mathbb{Z})^k) $ of the lattice $(N\mathbb{Z})^k$.
\begin{enumerate}
\item If $h_1,...,h_m \in H$, $\pi(h_i) \ne \pi(h_j)$ for $i\ne j$, and $\delta$ is as in \eqref{eq:defdelta} then the $m$ elements
$\gamma_i=h_i\delta h_i^{-1}$ are cube independent.

\item Moreover, if $\Delta =\{ \delta_1, \delta_2 \}$, $\delta_1 \ne \delta_2$ and $\pi(\delta_1) =\pi(\delta_2)$, then $\Delta$ is cube independent
along the image $\pi(H)$.

\item There exists a constant $C>0$, depending on the generating set of $G_k(p)$, such that for any integer $n>0$, the group $G$ admits $n^k$ commuting cube independent elements of length at most $Cn$.

\end{enumerate}
\end{proposition}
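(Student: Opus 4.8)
The plan is to prove the three statements in sequence, with (1) and (2) being essentially bookkeeping on top of Lemma~\ref{lem:monomialsdistinct}, and (3) being the place where the spaced polynomial property is really used quantitatively.

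For part (1): I would write out the conjugate $\gamma_i = h_i \delta h_i^{-1}$ in normal form. Using the normal form of Remark~\ref{rem:normalformgkI}, an element $h_i$ has lower-right entry a monomial $\prod_j X_j^{a_j^{(i)}}$, and conjugating $\delta = \bigl(\begin{smallmatrix} 1 & 1 \\ 0 & 1\end{smallmatrix}\bigr)$ by $h_i$ produces the unipotent matrix with upper-right entry (a unit times) the monomial $m_i := \prod_j X_j^{a_j^{(i)}}$, where $(a_j^{(i)})_j = \pi(h_i)/N \cdot N = \pi(h_i)$. Then a product $\gamma_1^{\varepsilon_1}\cdots\gamma_m^{\varepsilon_m}$ is the unipotent element whose upper-right entry is $\sum_{i : \varepsilon_i = 1} (\pm m_i)$ modulo $p$ — i.e., a flat Laurent polynomial (coefficients in $\{0,\pm 1\}$) supported on the monomials $m_i$ with $\varepsilon_i = 1$, with all exponents divisible by $N$. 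Two such sums coincide modulo $p$ iff their difference is divisible by $p$; that difference is of the form $u(x_1,\dots,x_k)$ with $u$ flat and all exponents divisible by $N$, hence of the form $v(x_1^N,\dots,x_k^N)$ with $v$ flat. By the spaced polynomial property $p$ does not divide any such nonzero $v(x_1^N,\dots,x_k^N)$, so the difference is zero as a Laurent polynomial, which (since the $m_i$ are distinct monomials, as $\pi(h_i)$ are distinct) forces $\varepsilon_i = \varepsilon'_i$ for all $i$. This gives pairwise distinctness of the $2^m$ products, i.e.\ cube independence; commutativity is immediate since all the $\gamma_i$ are unipotent and the unipotent subgroup is abelian (Remark~\ref{rem:normalformgkI}).

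For part (2): this is exactly the translation furnished by Remark~\ref{rem:cubeindependent}. Set $\bar\delta = \delta_1^{-1}\delta_2$; since $\pi(\delta_1) = \pi(\delta_2)$, $\bar\delta$ is unipotent with upper-right entry some nonzero flat polynomial $w$ modulo $p$ (one should first check $\bar\delta \neq e$, which holds as $\delta_1 \neq \delta_2$, and that $\bar\delta$ is genuinely unipotent). For $r_1,\dots,r_k \in H$ with distinct images under $\pi$, the conjugates $\gamma_i = r_i \bar\delta r_i^{-1}$ have upper-right entry $x^{\pi(r_i)} w(x)$; a product $\prod \gamma_i^{\varepsilon_i}$ has upper-right entry $\sum_{\varepsilon_i=1} x^{\pi(r_i)} w(x)$. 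If two such agree mod $p$, subtracting gives $p \mid \bigl(\sum_{i\in A} x^{\pi(r_i)} - \sum_{i\in B} x^{\pi(r_i)}\bigr) w(x)$ where $A,B$ are the respective index sets; since $p$ is irreducible (or at least, we only need: $p \nmid w$, because $w$ is a nonzero flat polynomial and $w(x) = w((x^{1/1})^{\dots})$ — more carefully, one reduces to the spaced polynomial hypothesis after noting $\pi(r_i) \in (N\mathbb Z)^k$ so the monomials $x^{\pi(r_i)}$ are $N$-th powers, and $w$ itself need not have exponents divisible by $N$, so here one genuinely needs $p$ prime/irreducible to pass from $p \mid (\text{$N$-spaced flat})\cdot w$ to $p \mid (\text{$N$-spaced flat})$). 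Then the spaced polynomial property forces $A = B$. Invoking Remark~\ref{rem:cubeindependent} in the form ``cube independence of all such conjugate families $\Longleftrightarrow$ cube independent property along $\pi(H)$'' completes (2).

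For part (3): fix a finite generating set of $G = G_k(p)$. The idea is to exhibit explicit elements $h_{\vec j}$, indexed by $\vec j \in \{1,\dots,n\}^k$, lying in $H$, with pairwise distinct $\pi$-images, and with word length $O(n)$; then part (1) gives $n^k$ commuting cube independent conjugates $\gamma_{\vec j} = h_{\vec j}\,\delta\,h_{\vec j}^{-1}$, each of length at most $2\,\lvert h_{\vec j}\rvert + \lvert \delta\rvert = O(n)$. For $h_{\vec j}$ take the diagonal element $M_{x_1}^{N j_1} M_{x_2}^{N j_2}\cdots M_{x_k}^{N j_k}$, which has $\pi$-image $N\vec j \in (N\mathbb Z)^k$, so lies in $H$, has distinct images for distinct $\vec j$, and has word length $\le N(j_1 + \dots + j_k) \le Nkn$. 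Set $C = 2Nk + \lvert \delta\rvert_{\text{gen}} + 1$ (or any convenient constant depending only on the generating set and on $N$, hence on the group and its generating set). The main obstacle I anticipate is not in (3) — which is a direct construction — but in being careful in (1) and (2) about the ``flat polynomial with all exponents divisible by $N$ $\Longleftrightarrow$ $v(x^N)$ with $v$ flat'' rewriting and, in (2), about exactly where primality/irreducibility of $p$ enters versus where the spaced polynomial property enters; I would isolate the clean statement ``if $p$ is a nonzero polynomial with the spaced polynomial property and $u$ is a nonzero flat polynomial all of whose monomials are $N$-th powers, then $p \nmid u$'' as an auxiliary observation (immediate from Definition~\ref{def:SPP} after writing $u(x) = v(x^N)$) and keep the diagonal-element argument of (3) completely separate.
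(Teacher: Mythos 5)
Your proposal is correct and follows essentially the same route as the paper's proof: in (1) the conjugates are unipotent with distinct monomial entries whose exponents lie in $(N\mathbb{Z})^k$, so differences of subset-sums are nonzero flat Laurent polynomials in $x_1^N,\dots,x_k^N$ and the spaced polynomial property applies; in (2) the same argument runs with $\delta_1^{-1}\delta_2$ in place of $\delta$, using that $p$ is prime and does not divide the upper-right entry $w$ of $\delta_1^{-1}\delta_2$ (note only that $w\neq 0$ modulo $p$ is needed --- your passing description of $w$ as ``flat'' is unwarranted but unused); and (3) is the same counting of $\pi$-preimages of lattice points in a ball, which you merely make explicit via the diagonal elements $M_{x_1}^{Nj_1}\cdots M_{x_k}^{Nj_k}$.
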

\begin{proof}

(1) For the first claim, note that the $\gamma_i$ are uni-upper-triangular matrices with monomials in the upper right corner, in particular, they commute. The first claim follows from observing that when $(\alpha_1,...,\alpha_k)$ and $(\beta_1,...,\beta_k)$ are distinct elements of $ \{0, 1\}^k$, the product $\prod_{i = 1}^m \gamma_i^{\alpha_i - \beta_i}$ is a uni-upper-triangular matrix with a nonzero flat Laurent polynomial in $x_1^N,...,x_k^N$ in the upper right corner. Because $p$ has the spaced polynomial property, this is a nonzero element of $G_k(p)$, so the $\gamma_i$ are cube independent. 

(2) To prove the second claim, we repeat the argument in the proof of the first claim above, replacing $\delta$ by $\delta_1^{-1} \delta_2$. Note that $\delta_1^{-1} \delta_2$ is a uni-upper-triangular matrix with a Laurent polynomial $q$ in the upper right corner, and that $q$ is not divisible by $p$ becasue $\delta_1^{-1}\delta_2 \neq 1$. Set $\rho_i =h_i \delta_1^{-1} \delta_2 h_i^{-1}$, and note that if $(\alpha_1,...,\alpha_k)$ and $(\beta_1,...,\beta_k)$ are distinct elements of $ \{0, 1\}^k$, the product $\prod_{i = 1}^m \rho_i^{\alpha_i - \beta_i}$ is a uni-upper-triangular with upper right entry equal to $q$ times a flat Laurent polynomial in $x_1^N,...,x_k^N$. Because $p$ has the spaced polynomial property it does not divide the flat Laurent polynomial, and we have already noted that $p$ does not divide $q$, so this product is not equal to the identity. Thus the $\rho_i$ are cube independent, and by Remark \ref{rem:cubeindependent} we see that $\Delta$ is cube independent along the image $\pi(H)$.

(3) Finally, since the balls in  $\mathbb{Z}^3$ have cardinality $\ge C n^3$, the third claim follows from the first one.

\end{proof}

Combining the second claim of Proposition \ref{prop:cubeindg3p} with the second claim of Lemma \ref{lemme:corelemma}, we obtain the statement of Theorem B.

Now consider the case of $p=1+x-y$. In the next section, we will show that $1+x+y$,
and hence also $p=1+x-y$ satisfy spaced polynomial property (for the constant $N=3$).
As we have already mentioned
the group $G_3(1+x-y)$ is isomorphic to the reduced torsion-free Baumslag group.
Therefore, as a particular case of Theorem $2$, we will be able to conclude that

\begin{corollary} \label{cor:restrictedBaumslag}
Any finite entropy, irreducible measure on 
the torsion-free restricted Baumslag group $\tilde{B}_2(\mathbb{Z})$ has nontrivial Poisson boundary.
\end{corollary}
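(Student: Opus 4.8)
The plan is to deduce Corollary \ref{cor:restrictedBaumslag} as a direct specialization of Theorem B, so the work splits into two parts: identifying the group and verifying the hypothesis of Theorem B. For the first part, recall from \eqref{eq:Baumslag} that $\tilde B_2(\Z) = G_3(1+x_1-x_2)$; the isomorphism sends $M_{y_1}\mapsto M_{x_1}$, $M_{y_1+1}\mapsto M_{x_2}$ and $M_{y_2}\mapsto M_{x_3}$, and is induced by the substitution $Y_1 = X_1$, $Y_1+1 = X_2$, $Y_2 = X_3$, which is exactly the statement that one is working over the ring $\Z(X_1^{\pm1},X_2^{\pm1},X_3^{\pm1})/(1+X_1-X_2)$. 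So nothing further is needed here beyond citing \eqref{eq:Baumslag}.

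For the second part, I need the polynomial $p = 1+x_1-x_2$ (viewed as a polynomial in three variables $x_1,x_2,x_3$, not involving $x_3$) to satisfy the spaced polynomial property. The excerpt announces that this will be shown in the next section: namely that $1+x+y$ satisfies the spaced polynomial property with $N=3$, and hence so does $1+x-y$ (the two polynomials differ by the substitution $y\mapsto -y$, which is an automorphism of the Laurent polynomial ring preserving the class of flat polynomials and the divisibility relation). I would simply invoke that forthcoming result. One small point worth a sentence: a polynomial in the variables $x_1,x_2$ satisfies the spaced polynomial property as a polynomial in $x_1,x_2,x_3$ if and only if it does so as a polynomial in $x_1,x_2$, since adjoining an extra variable $x_3$ on which $p$ does not depend does not change whether $p$ divides any given flat Laurent polynomial (the quotient ring just gets tensored with $\Z[x_3^{\pm1}]$, which is faithfully flat).

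With these two ingredients in hand, Theorem B applied to $k=3$ and $p = 1+x_1-x_2$ says precisely that any finite entropy irreducible measure $\mu$ on $G_3(1+x_1-x_2) = \tilde B_2(\Z)$ has nontrivial Poisson boundary, which is the assertion of the corollary. So the proof is essentially: combine \eqref{eq:Baumslag}, the spaced polynomial property of $1+x-y$ with $N=3$ proved in the next section, and Theorem B.

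The only genuine obstacle in this chain is the arithmetic input that $1+x+y$ has the spaced polynomial property — i.e. that for $N=3$, the polynomial $1+x+y$ divides no nonzero Laurent polynomial of the form $u(x^3,y^3)$ with coefficients in $\{0,\pm1\}$. This is the content deferred to the next section, and everything in the present corollary is formal once it is granted. I would flag this dependency explicitly rather than reproving it here, since the corollary is stated before that section for expository reasons.
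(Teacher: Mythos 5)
Your proposal is correct and follows essentially the same route as the paper: identify $\tilde B_2(\Z)$ with $G_3(1+x_1-x_2)$ via \eqref{eq:Baumslag}, invoke the spaced polynomial property of $1+x+y$ (hence of $1+x-y$) with $N=3$ established in Section \ref{sec:algebraic} (Proposition \ref{prop:flatdivisor} and Corollary \ref{cor:restrictedpol}, where the passage from two to three variables is also handled), and apply Theorem B. No gaps.
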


\section{Spaced Polynomial Property.} \label{sec:algebraic}

\subsection{Preliminary observations about the spaced polynomial property.}

We will use the term "cyclotomic" for polynomials that can be multivariate:

\begin{definition} 
A polynomial $p(x_1,...,x_d)$ in the ring $\Z[x_1,...,x_d]$ is {\it cyclotomic} if it is irreducible and divides some polynomial of the form $x_1^{i_1}...x_d^{i_d}-1$.
\end{definition}

\begin{definition} \label{def:gencyc}
A Laurent polynomial in the ring in the ring $\Z[x_1^{\pm 1}, ..., x_d^{\pm 1}]$ is a {\it generalized cyclotomic} if it is irreducible and divides a difference of Laurent monomials $x_1^{i_1}...x_d^{i_d} - x_1^{j_1}...x_d^{j_d}$. 
\end{definition}
\noindent Note that because Laurent monomials are units in the ring of Laurent polynomials, generalized cyclotomics always divide $x_1^{k_1}...x_d^{k_d} - 1$, where $k_{\ell} = i_{\ell} - j_{\ell}$.

\begin{remark}
A Laurent polynomial is a generalized cyclotomic if and only if it is of the form $\phi(m(x_1,...,x_d)) o(x_1, x_2, .., x_d)$ where $\phi$ is a one variable cyclotomic polynomial, and $m$ and $o$ are monomials in $x_1^{\pm 1},..., x_d^{\pm 1}$. 
\end{remark}
\begin{proof}
    To see this note that $\mathrm{SL}_d(\mathbb{Z})$ acts on the Laurent monomials in $x_1,...,x_d$ by its action on their exponents, and that this action gives a natural action on the ring of Laurent polynomials. Any generalized cyclotomic divides some $x_1^{k_1}...x_d^{k_d} - 1$. We can choose an element $g$ of $\mathrm{SL}_d(\mathbb{Z})$ which sends $x_1^{k_1}...x_d^{k_d}$ to $x_1^m$ for some $m \in \mathbb{N}$.
    $p$ divides $x_1^{k_1}...x_d^{k_d} - 1$, so $g(p)$ divides $x_1^m - 1$ in the ring of Laurent monomials. Thus $g(p)$ is Laurent monomial times a one variable cyclotomic polynomial $\phi$. Thus $p = \phi(x_1^{k_1/r}...x_d^{k_d/r}) o(x_1,...,x_d)$ where $o$ is a Laurent monomial, and $r$ is the greatest common divisor of $k_1,...,k_d$.
\end{proof}

\begin{lemma}\label{lem:easycyclotomic}
If $p$ is a generalized cyclotomic, then it does not have the spaced polynomial property. 
\end{lemma}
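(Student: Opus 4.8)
The plan is to show that a generalized cyclotomic $p$ divides a flat Laurent polynomial of the form $u(x_1^N, \dots, x_d^N)$ for every choice of $N$, thereby directly violating the spaced polynomial property. By the remark preceding the lemma, we may write $p = \phi(m(x_1,\dots,x_d))\, o(x_1,\dots,x_d)$, where $\phi$ is a one-variable cyclotomic polynomial, and $m$, $o$ are Laurent monomials. Since multiplying by the unit $o$ does not affect divisibility, it suffices to produce, for each $N$, a nonzero flat Laurent polynomial divisible by $\phi(m)$.

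First I would recall that $\phi$ is the minimal polynomial of some primitive $r$-th root of unity $\zeta$, so $\phi$ divides $t^r - 1$ in $\Z[t]$; consequently $\phi(m)$ divides $m^r - 1$ in the ring of Laurent polynomials. Now fix any $N > 0$. The key observation is that $m^r - 1$ itself need not be of the form $u(x_1^N,\dots,x_d^N)$, but we can remedy this by passing to a higher power: $\phi(m)$ divides $m^{rN} - 1 = (m^N)^r - 1$, and if $m = \prod_i x_i^{a_i}$ then $m^{rN} - 1 = \prod_i (x_i^N)^{r a_i} - 1$, which is visibly a flat Laurent polynomial in the variables $x_i^N$ (its two nonzero coefficients are $+1$ and $-1$). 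Hence $u(y_1,\dots,y_d) := \prod_i y_i^{r a_i} - 1$ is a nonzero flat Laurent polynomial with $p \mid u(x_1^N,\dots,x_d^N)$. Since $N$ was arbitrary, $p$ fails the spaced polynomial property.

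I do not anticipate a serious obstacle here; the only points requiring a little care are: (i) confirming that $\phi(m)$ genuinely divides $m^{rN}-1$ inside the Laurent polynomial ring (this follows from the polynomial divisibility $\phi(t) \mid t^{rN}-1$ in $\Z[t]$ by substituting $t = m$, using that substitution is a ring homomorphism $\Z[t] \to \Z[x_1^{\pm1},\dots,x_d^{\pm1}]$), and (ii) ensuring $u$ is nonzero and flat — it is, provided not all $a_i$ vanish, i.e. provided $m$ is not the constant monomial $1$. If $m$ were trivial then $\phi(m) = \phi(1)$ would be a nonzero integer constant, so $p = \phi(1) o$ would be a unit times a monomial; such a $p$ is not irreducible (unless it is itself a monomial, but monomials divide every $u(x_1^N,\dots,x_d^N)$ trivially and are excluded from being ``polynomials'' in the relevant sense, or are handled degenerately), so this edge case does not arise for a genuine generalized cyclotomic as defined. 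I would state this degenerate-case check explicitly but briefly.

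Thus the structure is: invoke the structural remark to reduce to the one-variable cyclotomic factor, use $\phi(t) \mid t^{rN} - 1$ and substitute the monomial $m$, observe the resulting polynomial is flat in the $N$-th powers of the variables, and conclude the spaced polynomial property fails for every $N$.

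\begin{proof}
By the remark preceding this lemma, write $p = \phi(m(x_1,\dots,x_d))\, o(x_1,\dots,x_d)$, where $\phi$ is a one-variable cyclotomic polynomial and $m = \prod_{i=1}^d x_i^{a_i}$, $o$ are Laurent monomials. Since $o$ is a unit in $\Z[x_1^{\pm 1},\dots,x_d^{\pm 1}]$, divisibility by $p$ is equivalent to divisibility by $\phi(m)$.

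Because $\phi$ is cyclotomic, it divides $t^r - 1$ in $\Z[t]$ for some $r > 0$. Fix any integer $N > 0$. Then $\phi(t)$ divides $t^{rN} - 1$ in $\Z[t]$, and applying the substitution homomorphism $\Z[t] \to \Z[x_1^{\pm 1},\dots,x_d^{\pm 1}]$, $t \mapsto m$, we conclude that $\phi(m)$ divides $m^{rN} - 1$ in the ring of Laurent polynomials. Now
$$
m^{rN} - 1 = \Big( \prod_{i=1}^d x_i^{a_i} \Big)^{rN} - 1 = \prod_{i=1}^d (x_i^N)^{r a_i} - 1 = u(x_1^N, \dots, x_d^N),
$$
where $u(y_1,\dots,y_d) = \prod_{i=1}^d y_i^{r a_i} - 1$ is a Laurent polynomial all of whose nonzero coefficients are $\pm 1$. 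Since $p$ is irreducible and $p = \phi(m) o$, the factor $\phi(m)$ is nonconstant, which forces $m$ to be a nonconstant monomial (otherwise $\phi(m) = \phi(1) \in \Z$); hence not all $a_i$ vanish and $u$ is a nonzero flat Laurent polynomial. As $p \mid \phi(m) \mid u(x_1^N,\dots,x_d^N)$ for every $N > 0$, the polynomial $p$ does not satisfy the spaced polynomial property.
\end{proof}
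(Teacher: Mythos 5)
Your proof is correct and uses essentially the same idea as the paper: the paper simply notes that $p$ divides some $x_1^{k_1}\cdots x_d^{k_d}-1$ and hence divides $(x_1^{k_1}\cdots x_d^{k_d})^N-1$, which is already flat in the $N$-th powers of the variables. Your detour through the decomposition $p=\phi(m)o$ and the divisibility $\phi(t)\mid t^{rN}-1$ is a harmless elaboration of the same one-line mechanism, and your handling of the degenerate case $m=1$ is fine.
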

\begin{proof}
If $p$ is a generalized cyclotomic, then it divides some $x_1^{k_1}...x_d^{k_d} - 1$. For any natural number $N$ the polynomial $p$ will also divide $(x_1^{k_1}...x_d^{k_d})^N -1$, so $p$ does not have the spaced polynomial property.

\end{proof}

As we have mentioned in the introduction, we believe that the converse statement to the claim of Lemma \ref{lem:easycyclotomic} is also true. 
It is not difficult to prove this in the case of (Laurent) polynomials in one variable:
\begin{lemma}
Let $\phi$ be a Laurent polynomial in $x$ with integer coefficients which is not a generalized cyclotomic.
Then $\phi$ satisfies the spaced polynomial property.
\end{lemma}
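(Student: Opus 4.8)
The plan is to reduce the one-variable statement to a clean divisibility question about flat polynomials and then eliminate the cyclotomic possibility by a root-of-unity argument. First I would normalize: multiplying by a monomial we may assume $\phi \in \Z[x]$ is an honest polynomial with $\phi(0)\neq 0$, and by dividing by its content we may assume $\phi$ is primitive. Since $\phi$ need not be irreducible, I would factor it over $\Z$ into irreducible primitive factors. If $\phi$ itself had the spaced polynomial property it would suffice, but in fact I want the contrapositive: suppose for every $N>0$ there is a nonzero flat $u$ with $\phi \mid u(x^N)$; I must produce the conclusion that some irreducible factor of $\phi$ is a generalized cyclotomic, hence (contrapositive again) that $\phi$ was a generalized cyclotomic — so I should instead assume from the start that no irreducible factor is cyclotomic and derive the spaced polynomial property.

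The key step is to control the roots. Let $\alpha$ be a root of $\phi$ (in $\overline{\Q}$, nonzero since $\phi(0)\neq 0$). If $\phi \mid u(x^N)$ then $u(\alpha^N) = 0$, i.e. $\alpha^N$ is a root of the flat polynomial $u$. Flat polynomials of bounded degree have only finitely many roots, but $u$ has unbounded degree, so this alone is not enough; the real leverage is a height/size bound. The crucial observation is that $\phi$ is not cyclotomic, so by Kronecker's theorem $\phi$ has a root $\alpha$ with $|\alpha|\neq 1$ — in fact, since $\phi(0)\neq 0$, not all roots can lie on the unit circle unless $\phi$ is (a monomial times) a product of cyclotomics, which is excluded. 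Fix such $\alpha$ with, say, $|\alpha|>1$ (replacing $x$ by $x^{-1}$, i.e. $\phi(x)$ by its reciprocal, if necessary — this is harmless for the Laurent statement). Then $|\alpha^N| = |\alpha|^N \to \infty$. On the other hand, if $u$ is flat with $\deg u = m$ and $u(\beta)=0$ for $|\beta|>1$, then $0 = |u(\beta)| \geq |\beta|^m - (|\beta|^{m-1} + \dots + 1) \geq |\beta|^m - \frac{|\beta|^m - 1}{|\beta| - 1}$, which is positive once $|\beta| \geq 2$, and more carefully a flat polynomial cannot vanish at any $\beta$ with $|\beta| \geq 2$ at all (the leading term dominates the rest). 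So if $|\alpha| \geq 2$ we are immediately done: no nonzero flat $u$ can have $\alpha^N$ as a root, contradiction, so the spaced polynomial property holds (with any $N$, indeed $N=1$). If $1 < |\alpha| < 2$, choose $N$ large enough that $|\alpha|^N \geq 2$; then again $\alpha^N$ cannot be a root of any nonzero flat polynomial, and the spaced polynomial property holds with that $N$.

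The main obstacle is the case where \emph{all} roots of $\phi$ lie on or inside the closed unit disc, yet $\phi$ is not a generalized cyclotomic. I would handle it as follows: if $\phi$ has any root off the unit circle, then by the product-of-roots formula (the constant term over the leading coefficient is $\pm\prod \alpha_i$, an integer, nonzero) some root must have absolute value $>1$, and the previous paragraph applies after possibly passing to the reciprocal polynomial. If instead \emph{every} root of $\phi$ has absolute value exactly $1$, then since $\phi \in \Z[x]$ is primitive with $\phi(0)\neq 0$, Kronecker's theorem forces every irreducible factor to be a cyclotomic polynomial, so $\phi$ is (up to a monomial) a product of one-variable cyclotomics, hence a generalized cyclotomic by the Remark preceding Lemma \ref{lem:easycyclotomic} — contradicting our hypothesis. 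This dichotomy closes the argument. I expect the only genuinely delicate point is making the "flat polynomial has no root of absolute value $\geq 2$" estimate precise and noting it is uniform in the degree; everything else is standard algebraic number theory (Kronecker) plus the reciprocal-polynomial symmetry that makes $|\alpha|<1$ and $|\alpha|>1$ interchangeable for Laurent polynomials.
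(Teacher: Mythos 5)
Your argument is essentially the paper's: locate a root $\lambda$ of $\phi$ with $|\lambda|>1$, choose $N$ with $|\lambda|^N\ge 2$, and observe that a nonzero flat polynomial cannot vanish at a point of modulus $\ge 2$ because its top term dominates the sum of all the others. Your handling of roots inside the unit disc via the reciprocal polynomial is a genuine improvement over the paper's text, which asserts that the product of the nonzero roots is an integer; that assertion already fails for $2x-1$ (irreducible, not cyclotomic, sole root $1/2$), and your reciprocal trick repairs exactly that case. (Also note the lemma must be read for irreducible $\phi$, as in the paper's Question: a reducible polynomial is never a generalized cyclotomic by definition, yet $(x-1)^2$ divides the flat polynomial $x^{3N}-x^{2N}-x^N+1=u(x^N)$ with $u(y)=(y-1)^2(y+1)$, so it fails the spaced polynomial property. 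Your factorization step quietly replaces the hypothesis by the stronger ``no irreducible factor is cyclotomic,'' which is the right hypothesis but not literally the stated one.)

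There remains a gap that your proof shares with the paper's: the reduction ``not generalized cyclotomic $\Rightarrow$ some root off the unit circle'' invokes Kronecker's theorem, which applies only to algebraic integers, i.e., requires $\phi$ to be monic up to sign. The polynomial $2x^2-x+2$ is irreducible over $\Z$ and is not a generalized cyclotomic (its roots $(1\pm i\sqrt{15})/4$ are not algebraic integers, hence not roots of unity), yet both roots have modulus exactly $1$; neither your dichotomy nor the paper's produces a usable $\lambda$ here, and the reciprocal trick does not help since the polynomial is palindromic. The fix is cheap and worth making explicit at the outset: a nonzero flat Laurent polynomial is primitive with extreme coefficients $\pm1$, so by Gauss's lemma any Laurent-polynomial divisor of one must, after multiplication by a monomial and a sign, be primitive with leading and constant coefficients equal to $\pm1$. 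Hence any $\phi$ failing this normalization satisfies the spaced polynomial property vacuously (with $N=1$), and for the remaining $\phi$ --- monic with constant term $\pm1$ --- the product of the roots has modulus $1$, so either all roots lie on the unit circle and Kronecker legitimately forces $\phi$ to be cyclotomic, or some root has modulus strictly greater than $1$ and your main estimate applies.
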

\begin{proof}
Consider the roots of $p$ over $\mathbb{C}$.
Recall that if an irreducible polynomial over the integers has all roots with absolute value one, then it is cyclotomic. The product of all nonzero roots of $p$ is an integer, so in fact, if $p$ is not generalized cyclotomic, then it has a root $\lambda \in \mathbb{C}$  with $|\lambda| > 1$. 

Choose $N$ such that $|\lambda ^N| >2$.
Let us prove that $p$ has the spaced polynomial property with respect to $N$. Seeking a contradiction, assume that $p$ divides a Laurent polynomial $u(x_1^N)$, where $u$ has coefficients in $\{-1, 0, 1\}$. Then in particular $\lambda$ is a root of $u(x_1^N)$, i.e. $u(\lambda^N) = 0$. If $u$ is degree $d$, then the highest degree term in $u(\lambda^N)$ has absolute value $|\lambda^{N}|^d$ and the sum of all other terms in $u(\lambda^N)$ have absolute value at most $\sum_{i=0}^{d-1} |\lambda^{N}|^i$. Because $|\lambda^N|>2$, we have $|\lambda^N|^d > \sum_{i=0}^{d-1} |\lambda^{N}|^i$, so the first term in $u(\lambda^N)$ has norm larger than all other terms combined, and $u(\lambda^N)$ cannot be $0$.  
\end{proof}

\subsection{Spaced polynomial properties of the polynomials for the Baumslag group.}

Let $\zeta=e^{\frac{2 \pi \i}{3}}$ be a primitive third root of unity.

\begin{lemma} \label{lem:computep}
$p(x,y)=\prod_{i,j=0}^2 (1+\zeta^i x + \zeta^j y)$ is equal to
$$
1+3x^3 + 3 y^3 + 3 x^6 +3 y^6 + 3 x^6 y^3 + 3 x^3 y^6+ x^9+y^9 - 21 x^3 y^3.
$$
\end{lemma}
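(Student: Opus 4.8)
The plan is to collapse the double product over $i,j$ into a single product over $i$ using the elementary factorization of $z^3-1$, then apply the classical factorization of $A^3+B^3+C^3-3ABC$ into linear factors over $\mathbb{Z}[\zeta]$, and finally carry out a short multinomial expansion.

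First I would record the identity $\prod_{j=0}^{2}(a+\zeta^j y)=a^3+y^3$, valid for any $a$. This is immediate by expansion: the product is a polynomial in $y$ of degree $3$ with leading coefficient $\zeta^{0}\zeta^{1}\zeta^{2}=\zeta^{3}=1$ and constant term $a^3$, while the coefficient of $y$ is $a^2(\zeta^{1}\zeta^{2}+\zeta^{0}\zeta^{2}+\zeta^{0}\zeta^{1})=a^2(1+\zeta^2+\zeta)=0$ and the coefficient of $y^2$ is $a(1+\zeta+\zeta^2)=0$. Applying this with $a=1+\zeta^i x$ for each fixed $i$ turns the product defining $p$ into
\[ p(x,y)=\prod_{i=0}^{2}\bigl((1+\zeta^i x)^3+y^3\bigr). \]
Next, expanding $(1+\zeta^i x)^3 = 1+3\zeta^i x+3\zeta^{2i}x^2+x^3$ (using $\zeta^{3i}=1$), the $i$-th factor becomes $A+\zeta^i B+\zeta^{2i}C$ with $A=1+x^3+y^3$, $B=3x$, $C=3x^2$. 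Then I would invoke the standard identity
\[ \prod_{i=0}^{2}\bigl(A+\zeta^i B+\zeta^{2i}C\bigr)=A^3+B^3+C^3-3ABC, \]
which follows from $A^3+B^3+C^3-3ABC=(A+B+C)(A+\zeta B+\zeta^2 C)(A+\zeta^2 B+\zeta C)$ together with $\zeta^{4}=\zeta$, so that the three factors appearing for $i=0,1,2$ match exactly.

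This reduces the lemma to a finite computation. One checks $B^3+C^3-3ABC=27x^3+27x^6-27x^3(1+x^3+y^3)=-27x^3y^3$, and the multinomial theorem gives $A^3=(1+x^3+y^3)^3=1+x^9+y^9+3x^3+3y^3+3x^6+3y^6+3x^6y^3+3x^3y^6+6x^3y^3$. Adding, the $6x^3y^3$ from $A^3$ and the $-27x^3y^3$ combine to $-21x^3y^3$, yielding precisely $1+3x^3+3y^3+3x^6+3y^6+3x^6y^3+3x^3y^6+x^9+y^9-21x^3y^3$, as claimed. There is no genuine obstacle here: the only points requiring care are the repeated use of $\zeta^{3}=1$ when cubing the linear forms and the sign and coefficient arithmetic in the final expansion of $(1+x^3+y^3)^3$.
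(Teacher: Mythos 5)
Your proposal is correct, and its first half coincides exactly with the paper's: both collapse the inner product via $\prod_{j}(a+\zeta^j y)=a^3+y^3$ to obtain $p(x,y)=\prod_{i=0}^{2}\bigl((1+\zeta^i x)^3+y^3\bigr)$. Where you diverge is in the final multiplication. The paper expands each factor as a five-term polynomial ($1+3\zeta^i x+3\zeta^{2i}x^2+x^3+y^3$) and then multiplies the three factors directly, extracting the coefficient of each surviving monomial by hand; the computation of the $-21x^3y^3$ coefficient in particular requires tracking several cross terms involving $\zeta+\zeta^2=-1$. You instead observe that the three factors have the form $A+\zeta^i B+\zeta^{2i}C$ with $A=1+x^3+y^3$, $B=3x$, $C=3x^2$, and invoke the factorization $A^3+B^3+C^3-3ABC=\prod_{i=0}^{2}(A+\zeta^i B+\zeta^{2i}C)$, which reduces everything to $B^3+C^3-3ABC=-27x^3y^3$ plus the multinomial expansion of $(1+x^3+y^3)^3$. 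This is a genuinely cleaner finish: it makes the vanishing of all monomials with exponents not divisible by $3$ automatic and isolates the $-21$ as $6-27$ in one line, at the cost of invoking one more classical identity. One cosmetic slip: in your verification of $\prod_j(a+\zeta^j y)=a^3+y^3$ you attached the pair-product symmetric function to the coefficient of $y$ and the single-sum to the coefficient of $y^2$, i.e.\ the two are transposed; since both $1+\zeta+\zeta^2$ and $\zeta+\zeta^2+\zeta^3$ vanish, this has no effect on the argument.
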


\begin{proof} We compute

\begin{align} p(x,y)&= (1+x+y)(1+x +\zeta y) (1+x+\zeta^2 y) \nonumber\\
&\times (1+ \zeta x +y)(1+ \zeta x +\zeta y) (1+\zeta x+\zeta^2 y) \nonumber\\
&\times (1+\zeta^2 x+y)(1+\zeta^2 x +\zeta y) (1+\zeta^2 x+\zeta^2 y). \nonumber
\end{align}

Observe that for any $a,b$, we have
\[(a+b)(a+\zeta b)(a+\zeta^2 b)=a^3+b^3.
\]
In particular this implies
\begin{align}
(1+x+y)  (1+x+\zeta y) (1+x+\zeta^2 y)&=(1+x)^3+y^3, \nonumber \\
(1+ \zeta x +y)(1+ \zeta x +\zeta y) (1+\zeta x+\zeta^2 y)&=(1+\zeta x)^3+y^3, \nonumber \\
(1+\zeta^2 x+y)(1+\zeta^2 x +\zeta y) (1+\zeta^2 x+\zeta^2 y)&=(1+\zeta^2 x)^3+y^3. \nonumber
\end{align}

\vskip12pt
Expanding the right hand sides above and multiplying them together gives
\[p(x,y)=
(1+3x + 3x^2 +x^3 +y^3)   (1+3\zeta x + 3 \zeta^2 x^2 +x^3 +y^3 )  (1+3\zeta^2 x + 3 \zeta x^2 +x^3 +y^3)
\]

 Observe that non-zero coefficient monomials are of the form $x^a y^b$, where $b$,  and hence also $a$ are divisible by $3$.
 
 \begin{itemize}
 
\item For $1$, $x^9$ and $y^9$  the coefficient is $1$. 
 
\item For $y^3$ we have $3y^3$, 
 so that the coefficient for $x^3$ and $y^3$ is $3$.

 \item For $y^6$ we have $3y^6$, so that the coefficient for $x^6$ and $y^6$ are $3$.
 
 \item For $y^6 x^3$  we have $3 x^3 y^6$, so that the coefficients for $x^3 y^6$,  as well as for $x^6 y^3$ are $3$.
 
 \item Finally, for $x^3 y^3$ we have 
 $$
  3y^3 2 x^3 +  y^3 x^2 x ( 3 \zeta  3 \zeta + 3 \zeta^2 3 \zeta^2     +  9 \zeta^2 +9 \zeta + 9 \zeta + 9 \zeta^2)= (6+ 27 (\zeta+\zeta^2)) x^3 y^3= -21 x^3y^3
$$

\end{itemize}
\end{proof}

In the following proposition, we prove spaced polynomial property for the polynomial that we need for the restricted Baumslag group.

\begin{proposition} \label{prop:flatdivisor}
Let $u(x,y) \ne 0$ be Laurent polynomial with coefficients $\pm 1$ or $0$. Then $u(x^3,y^3)$ is not divisible by $1+x+y$. In other words, the polynomial $1+x+y$ satisfies the spaced polynomial property (for the constant $N=3$).
\end{proposition}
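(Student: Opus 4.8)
The strategy is to use the factorization from Lemma \ref{lem:computep}. Suppose for contradiction that $1+x+y$ divides $u(x^3,y^3)$ for some nonzero flat Laurent polynomial $u$. The key observation is that the Galois-type symmetry $x \mapsto \zeta^i x$, $y \mapsto \zeta^j y$ fixes $u(x^3,y^3)$ (since only cubes of $x$ and $y$ appear), but permutes the factors $1+\zeta^i x + \zeta^j y$. Hence if $1+x+y$ divides $u(x^3,y^3)$, then so does each $1+\zeta^i x+\zeta^j y$ for $0 \le i,j \le 2$, and since these nine linear factors are pairwise non-associate (as elements of, say, $\mathbb{C}[x^{\pm1},y^{\pm1}]$), their product $p(x,y) = \prod_{i,j=0}^2(1+\zeta^i x+\zeta^j y)$ divides $u(x^3,y^3)$ in $\mathbb{C}[x^{\pm1},y^{\pm1}]$. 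By Lemma \ref{lem:computep}, $p(x,y)$ is a polynomial in $x^3$ and $y^3$; writing $p(x,y) = P(x^3,y^3)$ with $P$ an explicit integer polynomial, we get that $P$ divides $u$ in $\mathbb{C}[x^{\pm1},y^{\pm1}]$, hence (by Gauss / comparing with the rational function field) in $\mathbb{Q}[x^{\pm1},y^{\pm1}]$, and then in $\mathbb{Z}[x^{\pm1},y^{\pm1}]$ up to a unit.

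**Deriving the contradiction from coefficient size.** So it remains to show that no nonzero multiple of $P(x,y) = 1 + 3x + 3y + 3x^2 + 3y^2 + 3x^2 y + 3 x y^2 + x^3 + y^3 - 21 xy$ can have all coefficients in $\{-1,0,1\}$. Here I would argue by evaluating at a well-chosen point, or more robustly by an $L^1$/$L^\infty$ argument. The cleanest route: a flat Laurent polynomial $u$ satisfies $\|u\|_1 = \|u\|_\infty^{(1)} := $ (number of nonzero terms), but more to the point, one can test against a substitution. For instance, substitute $x = y = 1$: then $P(1,1) = 1 + 3 + 3 + 3 + 3 + 3 + 3 + 1 + 1 - 21 = 0$, which is unhelpful, so instead I would substitute a point where $P$ is large. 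Better: since $P(x^3,y^3) = p(x,y) = \prod (1+\zeta^i x + \zeta^j y)$, evaluate at $x = y = t$ for real $t > 0$; then $p(t,t) = \prod_{i,j}(1 + \zeta^i t + \zeta^j t)$, and using $(1+a)(1+\zeta a)(1+\zeta^2 a) = 1 + a^3$ twice in the right groupings one computes $p(t,t) = \big((1+t)^3 + t^3\big)\big((1+\zeta t)^3 + t^3\big)\big((1+\zeta^2 t)^3 + t^3\big)$, whose absolute value grows like $t^9$ as $t \to \infty$. Meanwhile any flat Laurent polynomial $u$ of "width" $w$ in each variable satisfies $|u(t^3,t^3)| \le C \cdot t^{3 \cdot(\text{top degree of }u)}$ but also the \emph{quotient} $u/P$ — if it existed — would force the leading behavior to match; comparing top-degree terms of $u(x^3,y^3)$ and $P(x^3,y^3) \cdot (u/P)$ gives that the leading coefficient of $u/P$ divides $1$, and then an induction on degree (peeling off leading monomials exactly as in the proof of Lemma \ref{lem:monomialsdistinct} and the one-variable case) shows every coefficient of the quotient is an integer, while a direct size estimate on the middle coefficient $-21 xy$ of $P$ forces a coefficient of $u$ outside $\{-1,0,1\}$.

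**The main obstacle.** The genuinely delicate step is the last one: showing that $P \cdot g$ cannot be flat for any nonzero $g \in \mathbb{Z}[x^{\pm1},y^{\pm1}]$. The coefficient $-21$ is the crucial feature — it is what makes $p(x,y)$ (equivalently the polynomial $1+x+y$ under the cube substitution) "spaced." Concretely, if $P g$ is flat, pick the lexicographically extremal monomial $x^a y^b$ of $g$; then the term $x^3 \cdot (x^a y^b)$ in $P g$ has a contribution only from $P$'s $x^3$ coefficient times $g_{a,b}$ plus lower-order interference, forcing $|g_{a,b}| = 1$ after the peeling argument, and then tracking how the $-21xy$ term of $P$ interacts with the $\pm1$ coefficients of $g$ at nearby monomials produces a coefficient of absolute value $\ge 2$ in $Pg$. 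I expect the bookkeeping here — choosing the right monomial order, controlling the "boundary" monomials of $g$, and isolating the coefficient where the $-21$ does its damage — to be where all the real work lies; everything before it (the symmetry argument producing divisibility by $p$, and the descent to integer coefficients) is essentially formal.
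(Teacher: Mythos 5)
The first half of your argument --- the $(x,y)\mapsto(\zeta^i x,\zeta^j y)$ symmetry forcing all nine conjugate linear factors, hence their product $p(x,y)$, to divide $u(x^3,y^3)$ --- is exactly the paper's argument and is fine (including the implicit point that the quotient is itself invariant under the symmetry, hence a polynomial in $x^3,y^3$, which justifies your reduction to ``no nonzero integer multiple of $P$ is flat''). You have also correctly identified the coefficient $-21$ as the feature that makes everything work. But the step that you yourself flag as ``where all the real work lies'' is genuinely missing, and the route you sketch for it does not obviously close. Your plan is to peel off leading monomials of the quotient $g=u/P$ to force its extremal coefficients to be $\pm 1$ and then watch the $-21xy$ term create a coefficient of size $\ge 2$. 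The problem is that flatness of the product $Pg$ gives you no a priori control on the individual coefficients of $g$: they are integers, but nothing in your argument bounds them, and ``peeling'' only pins down the single lex-leading coefficient (where exactly one product of terms contributes); at the very next monomial several terms of $P$ already interfere, so the induction you gesture at does not propagate $|g_{a,b}|=1$ inward. Ruling out large coefficients of $g$ is precisely the content of the proposition, so assuming it is circular.

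The paper closes this step by a different and complete coefficient argument that you should compare with. Let $M$ be the \emph{maximal} absolute value of a coefficient of the quotient $q$, let $i_0$ be the minimal $x$-exponent among the terms of $q$ achieving $\pm M$, and pick $j_0$ with $q_{i_0,j_0}=\pm M$. Then the coefficient of $x^{i_0+3}y^{j_0+3}$ in $pq=u(x^3,y^3)$ equals $\mp 21M+S$, where $S$ collects the other convolution terms. The sum of absolute values of the remaining coefficients of $p$ is $21$, and the ones with $x$-exponent $>3$ (total mass $7$) pair with coefficients of $q$ of $x$-exponent $<i_0$, which by the minimality of $i_0$ have absolute value at most $M-1$; hence $|S|\le 14M+7(M-1)=21M-7$ and the coefficient in question has absolute value at least $7$, contradicting flatness. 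Note that this works uniformly in $M$ and requires no induction and no control of individual coefficients of $q$ beyond the definition of $M$. If you want to salvage your write-up, replace your peeling sketch with this extremal-coefficient estimate.
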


\begin{proof}[Proof of Proposition \ref{prop:flatdivisor}] 

Because monomials are units in the ring of Laurent polynomials we can assume without loss of generality that $u$ is a polynomial. Seeking a contradiction assume that $1+x+y$ divides $u(x^3,y^3)$. The polynomial $u(x^3,y^3)$ is invariant under the transformation $(x,y) \mapsto (\zeta_i x,\zeta_j y)$, so $1+\zeta_i x + \zeta_j y$  also divides $u(x^3, y^3)$ for  $i,j \in \{0,1,2\}$. Since  the (linear) polynomials $1+\zeta_i x + \zeta_j y$ are irreducible and not-proportional, we know that their product
$p(x,y)=\prod_{i,j=0}^2 (1+\zeta_i x + \zeta_j y)$ divides $u(x^3, y^3)$. In particular, there is a polynomial $q(x,y)$ with integer coefficients satisfying
\[p(x,y) q(x,y)=u(x^3,y^3).\]

Let $M$ be the maximal absolute value of any coefficient in $q(x,y)$, and consider all terms in $q(x,y)$ of the form $\pm M x^i y^j$. Let $i_0$ be the minimal power of $x$ occurring in any such term and choose some $j_0$ such that  $\pm M x^{i_0} y^{j_0}$ is a term in the polynomial $q(x,y)$.

The coefficients of $u(x^3,y^3)$ are the convolution of the coefficients of $p$ and $q$. Consider the coefficient of $x^{i_0+3}y^{j_0+3}$ in $u(x^3,y^3)$. This term is of the form $\pm21 M+S$, where $\pm 21 M$ corresponds to the monomial $21x^3 y^3$ in $p(x,y)$ multiplied by $\pm M x^{i_0} y^{j_0}$ in $q(x,y)$ and $S$ is the sum of all other terms in the convolution. 

Now we show $|S| \leq 21 M-7$. This comes from the fact (see Lemma \ref{lem:computep}) that the sum of absolute values of all remaining coefficients of $p$ is $21$ and  each of these coefficients is multiplied by a coefficient of $q$ which can be at most $M$. We subtract $7$ because coefficients of the monomials of $p$ whose exponent for $x$ is larger than $3$ (the norms of these coefficients sum to $7$) are multiplied by coefficients in $q$ with norm at most $M-1$.

This implies  that the coefficient of $x^{i_0+3}y^{j_0+3}$ in $u(x^3,y^3)$ has absolute value at least $7$ which gives a contradiction. 
\end{proof}

\begin{corollary}\label{cor:restrictedpol}  $1+x_1-x_2$ has the spaced polynomial property, considered as a polynomial in $x_1$ and $x_2$. It also has a spaced polynomial property considered as a polynomial in $x_1$, $x_2$, and $x_3$.

\end{corollary}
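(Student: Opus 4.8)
The plan is to deduce both assertions from Proposition \ref{prop:flatdivisor}, which already establishes the spaced polynomial property for $1+x+y$ with the constant $N=3$. The first observation is that $1+x_1-x_2$ and $1+x+y$ are related by the invertible change of variables $x \mapsto x_1$, $y \mapsto -x_2$ (equivalently, $y \mapsto x_2$ together with multiplying by the unit $-1$ and relabeling). Since this substitution is an automorphism of the ring of Laurent polynomials $\Z[x^{\pm 1},y^{\pm 1}]$ that sends the set of flat Laurent polynomials to itself (negating a variable only permutes monomials up to sign, and $\pm 1$ coefficients stay $\pm 1$), divisibility is preserved in both directions: $1+x_1-x_2$ divides $u(x_1^3, x_2^3)$ for some nonzero flat $u$ if and only if $1+x+y$ divides $\tilde u(x^3,y^3)$ for the corresponding nonzero flat $\tilde u$ (note $(-x_2)^3 = -x_2^3$, so the substitution still respects the "$N$-th power" structure). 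Hence the spaced polynomial property for $1+x+y$ with $N=3$ transfers verbatim to $1+x_1-x_2$ viewed as a polynomial in $x_1,x_2$.

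For the second assertion, I would argue that adding a dummy variable cannot destroy the property. Suppose, for contradiction, that there is a nonzero flat Laurent polynomial $u(x_1,x_2,x_3)$ such that $1+x_1-x_2$ divides $u(x_1^N, x_2^N, x_3^N)$ in $\Z[x_1^{\pm 1}, x_2^{\pm 1}, x_3^{\pm 1}]$ for every $N$; take $N=3$. Write $u(x_1^3,x_2^3,x_3^3) = (1+x_1-x_2)\,q(x_1,x_2,x_3)$. Since $1+x_1-x_2$ does not involve $x_3$, I would specialize $x_3$ to a value that keeps some monomial of $u$ alive: because $u\neq 0$, expand $u$ in powers of $x_3$ and pick a nonzero coefficient polynomial $u_k(x_1,x_2)$ (the coefficient of $x_3^k$ in $u$); this $u_k$ is again flat. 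Setting $x_3 = 1$ gives $\sum_k u_k(x_1^3,x_2^3) = (1+x_1-x_2)\,q(x_1,x_2,1)$, but the left side need not be flat after summing, so instead I would extract a single homogeneous-in-$x_3$ layer: the coefficient of $x_3^{3k}$ on the left is exactly $u_k(x_1^3,x_2^3)$, while on the right $1+x_1-x_2$ divides the coefficient of $x_3^{3k}$ in $(1+x_1-x_2)q$, which is $(1+x_1-x_2)$ times the coefficient of $x_3^{3k}$ in $q$. Therefore $1+x_1-x_2$ divides $u_k(x_1^3,x_2^3)$ with $u_k$ a nonzero flat Laurent polynomial in two variables, contradicting the two-variable case just proved.

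I expect the only genuine subtlety — and hence the "main obstacle," though it is a mild one — to be bookkeeping in the second part: making sure that passing from the three-variable divisibility to a two-variable one really does land on a flat polynomial and a genuine power-of-$N$ substitution, rather than something like $u_k(x_1^3, x_2^3)$ with the wrong exponents or spurious coefficients. The clean way to handle this is to observe that $\Z[x_1^{\pm1},x_2^{\pm1},x_3^{\pm1}] = \bigl(\Z[x_1^{\pm1},x_2^{\pm1}]\bigr)[x_3^{\pm1}]$ and that $1+x_1-x_2$, as an element of the coefficient ring, divides a Laurent polynomial in $x_3$ over that ring if and only if it divides every coefficient; the grading of $u(x_1^3,x_2^3,x_3^3)$ by the degree in $x_3$ then isolates the flat two-variable pieces $u_k(x_1^3, x_2^3)$ directly. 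Everything else is the routine change-of-variables argument of the first paragraph, so no long computation is needed.
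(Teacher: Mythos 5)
Your proposal is correct and follows essentially the same route as the paper: the first claim via the change of variables $x\mapsto x_1$, $y\mapsto -x_2$ (transferring Proposition \ref{prop:flatdivisor} with $N=3$), and the second by isolating a single coefficient in the $x_3$-grading to reduce to the two-variable case. Your version just spells out the flatness/parity bookkeeping that the paper leaves implicit.
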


\begin{proof}
The first statement is simply a change of variables $x = x_1$, $y= -x_2$. For the second statement observe that after restricting our attention to the smallest power of $x_3$ to appear in some multiple of $1 + x_1 - x_2$, we are left with a multiple in the variables $x_1, x_2$ and can apply Proposition \ref{prop:flatdivisor}.
\end{proof}

The Corollary implies that the restricted Baumslag group $\bar{B}_2$ satisfies the assumption of Theorem B: that is, this group is isomorphic to $G_3(p)$, where
$p$ is a polynomial in three variables, satisfying the spaced polynomial property.

Now we give another lemma to address the general Baumslag group.

\begin{lemma} \label{lem:Baumslagalgebra} Let $f$ be a polynomial in $Y_1$ and $Y_2$
of the form $f=\sum_{a,b,c,d}^{\infty} C_{a,b,c,d} Y_1^{3a} (Y_1 +1)^{3b} Y_2^{c} (Y_2+1)^d$ where $C_{a,b,c,d} \in \{-1,0,1\}$, at least one $C_{a,b,c,d}$ is nonzero, and for any given $a,b, c$, there is at most one $d$ for which $C_{a,b,c,d} \neq 0$.
Then $f$ is not the zero polynomial.
\end{lemma}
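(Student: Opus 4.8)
The idea is to reduce this two--variable statement to the \emph{one}--variable spaced polynomial property already established for the restricted Baumslag polynomial, namely Corollary~\ref{cor:restrictedpol}, that $1+x_1-x_2$ has the spaced polynomial property with constant $N=3$. I would first arrange that $a,b,c,d\ge 0$: since $Y_1$, $Y_1+1$, $Y_2$, $Y_2+1$ are units in the ambient ring $\Z[Y_1^{\pm1},(Y_1+1)^{\pm1},Y_2^{\pm1},(Y_2+1)^{\pm1}]$ (the coefficient ring for $B_2(\Z)$), multiplying $f$ by a monomial $Y_1^{3M}(Y_1+1)^{3M'}Y_2^{M''}(Y_2+1)^{M'''}$ with $M,M',M'',M'''$ large changes neither whether $f=0$ nor the hypotheses on the coefficients: the exponents of $Y_1$ and of $Y_1+1$ stay divisible by $3$, and a fixed shift of $(a,b,c,d)$ is a bijection, so ``for each $(a,b,c)$ at most one $d$ is nonzero'' is preserved. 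After this reduction each $Y_2^c(Y_2+1)^d$ is an honest polynomial in $Y_2$, so I may view $f$ inside $R[Y_2]$, where $R=\Z[Y_1^{\pm1},(Y_1+1)^{\pm1}]$ is an integral domain, and expand $(Y_2+1)^d=\sum_{j\ge 0}\binom{d}{j}Y_2^{j}$ to write $f=\sum_k g_k(Y_1)\,Y_2^{k}$ with each $g_k\in R$.

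The key step is to isolate the correct coefficient $g_k$. Let $c_0$ be the minimal value of $c$ occurring among the nonzero coefficients $C_{a,b,c,d}$. The lowest power of $Y_2$ appearing in $Y_2^c(Y_2+1)^d$ is exactly $Y_2^{c}$, with coefficient $\binom{d}{0}=1$. Hence no term of $f$ contributes to $Y_2^{k}$ for $k<c_0$, and the only terms contributing to $Y_2^{c_0}$ are those with $c=c_0$, each contributing $C_{a,b,c_0,d}\cdot Y_1^{3a}(Y_1+1)^{3b}$. Therefore
\[
g_{c_0}(Y_1)=\sum_{a,b}\Bigl(\sum_{d}C_{a,b,c_0,d}\Bigr)\,Y_1^{3a}(Y_1+1)^{3b}.
\]
By hypothesis, for each pair $(a,b)$ there is at most one $d$ with $C_{a,b,c_0,d}\ne 0$, so the inner sum is a single coefficient lying in $\{-1,0,1\}$; and it is not identically $0$, since the value $c_0$ is attained by some term. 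Thus $g_{c_0}$ is a \emph{nonzero flat} integer combination of the elements $Y_1^{3a}(Y_1+1)^{3b}$.

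To conclude I would invoke Corollary~\ref{cor:restrictedpol} through the ring isomorphism $\Z[x_1^{\pm1},x_2^{\pm1}]/(1+x_1-x_2)\cong R$ sending $x_1\mapsto Y_1$, $x_2\mapsto Y_1+1$ (here $x_2\equiv 1+x_1$, so the quotient is $\Z[Y_1^{\pm1},(Y_1+1)^{\pm1}]$). Set $u(x_1,x_2)=\sum_{a,b}\bigl(\sum_d C_{a,b,c_0,d}\bigr)x_1^{a}x_2^{b}$, a nonzero flat Laurent polynomial. The spaced polynomial property with $N=3$ says $1+x_1-x_2$ does not divide $u(x_1^3,x_2^3)$, i.e. $u(x_1^3,x_2^3)\ne 0$ in the quotient; transported along the isomorphism this is precisely $g_{c_0}\ne 0$ in $R$. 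Since $R$ is a domain and $R[Y_2]$ a polynomial ring over it, one nonzero coefficient forces $f\ne 0$, which is the assertion.

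I expect the only delicate point to be the bookkeeping in the key step — in particular the choice to extract the \emph{lowest} power of $Y_2$ rather than to attempt a symmetric argument in $Y_1$ and $Y_2$. It is exactly at $Y_2^{c_0}$ that the binomial expansion of $(Y_2+1)^d$ produces the clean coefficient $1$, so that the $Y_2$--dependence drops out entirely and what remains is a genuine flat combination in the single variable $Y_1$; and the $N=3$ spaced polynomial property of $1+x_1-x_2$ (equivalently of $1+x+y$, Proposition~\ref{prop:flatdivisor}) is precisely what certifies such a combination to be nonzero. This also accounts for the asymmetry in the statement — exponents of $Y_1,Y_1+1$ divisible by $3$, those of $Y_2,Y_2+1$ unrestricted — since divisibility by $3$ is needed only where flatness is exploited, namely in the variable $Y_1$.
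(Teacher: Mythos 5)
Your proof is correct and follows essentially the same route as the paper: the paper normalizes so that the minimal $Y_2$-exponent is $0$ and then sets $Y_2=0$, which is exactly your extraction of the coefficient of $Y_2^{c_0}$ (the binomial constant term $\binom{d}{0}=1$ playing the role of $(0+1)^d=1$), and both arguments then conclude via Corollary~\ref{cor:restrictedpol}. The only difference is presentational — your version is spelled out in $R[Y_2]$ with the flatness bookkeeping made explicit, which matches the paper's intent.
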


\begin{proof}
Without loss of generality, we assume that there exists at least one triple $a,b,d$ such that $C_{a,b,0,d}$ is not equal to 0. 
(otherwise we divide $f$ by a power of $Y_2$).

Set $Y_2=0$ and then consider $f$
as a polynomial in $Y_1$.
We see that for any $a,b,c,d$ with $c>0$, the term $C_{a,b,c,d}Y_1^{3a} (Y_1 +1)^{3b} Y_2^{c} (Y_2+1)^d$ evaluates to $0$. We also see that for every $a,b$ there is at most one $d$ such that $C_{a,b,0,d}\neq 0$ 
and since $Y_2+1=0+1=1$, the associated product is $C_{a,b,0,d}Y_1^{3a}(Y_1+1)^{3b}Y_2^0(Y_2+1)^d=C_{a,b,0,d}Y_1^{3a}(Y_1+1)^{3b}$.
Thus after setting $Y_2=0$, we obtain a non-trivial flat polynomial in $Y_1^3,(Y_1+1)^3$ and therefore, by
Corollary \ref{cor:restrictedpol} with $x_1 = Y_1$ and $x_2 = Y_1 + 1$ this polynomial is nonzero. 

\end{proof}

The proposition below will ensure that

\begin{proposition}\label{prop:ciBaumslag}
Let $G=B_2(\mathbb{Z})$. Consider the map $\phi':G \to \mathbb{Z}^4$ that sends elements with $(y_1)^{\alpha_1} (y_1+1)^{\alpha_2}(y_2)^{\alpha_3}(y_2+1)^{\alpha_1}$ in the lower right corner of the matrix to $(\alpha_1, \dots, \alpha_4)$.
We denote by $\phi$ its projection to the first three coordinates $(\alpha_1, \alpha_2, \alpha_3)$.
Consider the lattice $L = 3\mathbb{Z} \times 3\mathbb{Z} \times \mathbb{Z} \subset \mathbb{Z}^3$ where $\alpha_1$ and $\alpha_2$ are both divisible by $3$ and put $H=\phi^{-1}(L)$. It is clear that $H$ is a finite index subgroup of $G$.
Consider some $\Delta=\{\delta_1, \delta_2\}$ such that
$\phi(\delta_1)= \phi(\delta_2)$ and $\delta_1 \neq \delta_2$.
We claim that $\Delta$ is cube independent along the image $\phi(H)$.
\end{proposition}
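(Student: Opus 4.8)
The strategy mirrors the proof of Proposition \ref{prop:cubeindg3p}(2): set $\bar\delta = \delta_1^{-1}\delta_2$, which is a uni-upper-triangular matrix whose upper-right entry is a nonzero polynomial $q$ in the rational function field generated by $Y_1, Y_1+1, Y_2, Y_2+1$ (the entry is nonzero precisely because $\delta_1 \neq \delta_2$). By Remark \ref{rem:cubeindependent}, proving cube independence of $\Delta$ along $\phi(H)$ is equivalent to showing that for any $r_1,\dots,r_m \in H$ with distinct images under $\phi$, the conjugates $\gamma_i = r_i \bar\delta r_i^{-1}$ satisfy the usual cube independence property. Since these $\gamma_i$ are all uni-upper-triangular with monomial-scaled copies of $q$ in the corner, they commute, and cube independence reduces to showing that for distinct tuples $(\alpha_i), (\beta_i) \in \{0,1\}^m$ the product $\prod_i \gamma_i^{\alpha_i - \beta_i}$ is not the identity, i.e. that a certain $\mathbb{Z}$-linear combination $\sum_i c_i \, (\text{monomial}_i) \cdot q$ with $c_i \in \{-1,0,1\}$ not all zero is nonzero in the ring $\Q(Y_1,Y_2)$.

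\textbf{Key steps.} First I would record how conjugation acts: the conjugate of $\bar\delta$ by an element $g$ depends only on $\phi'(g) = (\alpha_1,\alpha_2,\alpha_3,\alpha_4)$, and multiplies the corner entry $q$ by the Laurent monomial $Y_1^{\alpha_1}(Y_1+1)^{\alpha_2} Y_2^{\alpha_3}(Y_2+1)^{\alpha_4}$. Next, as in Lemma \ref{ex:baumslagci}, I would reduce to the case where all the relevant exponents are nonnegative: conjugating all $r_i$ simultaneously by a large positive power of $Y_1(Y_1+1)Y_2(Y_2+1)$ does not affect cube independence, so we may assume $\phi'(r_i) \in \Z_{\geq 0}^4$, and in fact that each $\alpha_1$-exponent and each $\alpha_2$-exponent is a nonnegative multiple of $3$ since $r_i \in H$. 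Now $\prod_i \gamma_i^{\alpha_i-\beta_i}$ has corner entry $q \cdot f$, where
\[
f = \sum_{i=1}^m (\alpha_i - \beta_i)\, Y_1^{3a_i}(Y_1+1)^{3b_i} Y_2^{c_i}(Y_2+1)^{d_i},
\]
with $\alpha_i - \beta_i \in \{-1,0,1\}$ not all zero, and $(a_i,b_i,c_i,d_i) = \tfrac13\alpha_1^{(i)}, \tfrac13\alpha_2^{(i)}, \alpha_3^{(i)}, \alpha_4^{(i)}$ pairwise distinct in the first three coordinates (because the $\phi(r_i)$ are distinct). Crucially, distinctness of $(a_i,b_i,c_i)$ means: for each fixed $(a,b,c)$ there is at most one $i$ — hence at most one $d$ — with a nonzero coefficient. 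Thus $f$ has exactly the shape required by Lemma \ref{lem:Baumslagalgebra}, which tells us $f \neq 0$. Finally, since $q \neq 0$ and the rational function field is a domain, $q f \neq 0$, so the product is not the identity; this establishes cube independence of the $\gamma_i$, and hence via Remark \ref{rem:cubeindependent} cube independence of $\Delta$ along $\phi(H)$. That $H$ has finite index in $G$ is immediate since $L$ has finite index in $\Z^3$ and $\phi$ is a homomorphism onto $\Z^3$.

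\textbf{Main obstacle.} The technical heart is arranging the coefficients to land in $\{-1,0,1\}$ \emph{together with} the "at most one $d$ per $(a,b,c)$" condition, so that Lemma \ref{lem:Baumslagalgebra} applies cleanly — this is exactly why the lattice $L$ was chosen to make $\alpha_1, \alpha_2$ divisible by $3$ (so the $Y_1, Y_1+1$ exponents are cubes, matching the polynomial $p$ from Lemma \ref{lem:computep} associated to the restricted Baumslag group) while leaving the $Y_2$ direction unconstrained (handled by passing to $Y_2 = 0$ inside Lemma \ref{lem:Baumslagalgebra}). The one point needing care is the normalization/positivity reduction: one must check that the simultaneous conjugation trick genuinely produces exponents that are nonnegative multiples of $3$ in the first two slots and nonnegative integers in the last two, without disturbing the distinctness of the $\phi(r_i)$ — but this is exactly the argument already used in the proof of Lemma \ref{ex:baumslagci}, so it transfers with only notational changes.
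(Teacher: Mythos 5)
Your proposal is correct and follows essentially the same route as the paper's proof: reduce via Remark \ref{rem:cubeindependent} to cube independence of the commuting conjugates of $\delta_1^{-1}\delta_2$, normalize exponents to be nonnegative (multiples of $3$ in the $Y_1$, $Y_1+1$ slots), factor the resulting corner entry as $q\cdot f$ with $q\neq 0$, and apply Lemma \ref{lem:Baumslagalgebra} to conclude $f\neq 0$. The only difference is presentational — the paper first treats the special case $\Delta=\{e,\delta\}$ and then the general one, while you handle the general case directly — so no further comment is needed.
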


\begin{proof}
First, we prove this in the special case $\delta_1= \delta$, $\delta_2=e$.

To prove the cube independence along the image $\phi(H)$, first let $h_1,...,h_r$ be elements of $H$ with distinct images under $\phi$. We will set $\gamma_i = h_i \delta h_i^{-1}$, and show that $\gamma_1,...,\gamma_r$ are cube independent. First note that $\gamma_i$ is a uni-upper-triangular matrix whose upper right entry has the form $Y_1^{3k} (Y_1+1)^{3l} Y_2^{m} (Y_2+1)^{j_{k,l,m}}$ where $\phi'(h_i) = (3k, 3l, m, j_{k, l, m})$. In particular the $\gamma_i$ commute with one another. Let $(\alpha_1,...,\alpha_r)$ and $(\beta_1,...,\beta_r)$ be distinct elements of $\{0, 1\}^r$, it is enough to show that the product $\prod_{i=1}^r \gamma_i^{\alpha_i - \beta_i}$ is not the identity. 

The product $\prod_{i=1}^r \gamma_i^{\alpha_i - \beta_i}$ is a $2$ by $2$ matrix. The entry in the upper right corner of this matrix has the form 
$$
q = \sum_{k,l,m} \varepsilon_{3k,3l, m} Y_1^{3k} (Y_1+1)^{3l} Y_2^{m} (Y_2+1)^{j_{k,l,m}},
$$
where each tuple $(3k, 3l, m, j_{k, l, m})$ is of the form $\phi'(h_i)$ for one of the $h_i$. each $\varepsilon_{k,l,m}$ takes values in $\{-1, 0, 1\}$, in particular $\alpha_i - \beta_i = \varepsilon_{\phi(h_i)}$.
Without loss of generality all exponents $(k, l, m, j_{k, l, m})$ in the sum are positive, (if not multiply by a term $ (Y_1)^{3A} (Y_1+1)^{3B} Y_2^{C} (Y_2+1)^D$ to make them positive) so Lemma  \ref{lem:Baumslagalgebra} implies that $q \neq 0$ and we have cube independence.

Now consider the general case where $\delta_1$ and $\delta_2$ are not equal, but have the same image under $\phi$. This argument is quite similar. Let $h_1,...,h_r$ be elements of $H$ with distinct images under $\phi$. We will set $\rho_i = h_i \delta_1^{-1} \delta_2 h_i^{-1}$, and show that the $\rho_i$ are cube independent. First note that $\rho_i$ is uni-upper-triangular with upper right entry equal to $f(Y_1, Y_2) Y_1^{3k} (Y_1 + 1)^{3 l} Y_2^m (Y_2 + 1)^{j_{k, l, m}}$ where $\phi'(h_i) = (3k, 3 l, m, j_{k, l, m})$ and $f(Y_1, Y_2)$ is the upper right entry of $\delta_1^{-1} \delta_2$. 
Repeating the same argument from the special case above we get that $\prod_{i = 1}^r \rho_i^{\alpha_i - \beta_i}$ is a matrix with upper right entry
$$s = f(Y_1, Y_2) \sum_{k,l,m} \varepsilon_{3k,3l, m} Y_1^{3k} (Y_1+1)^{3l} Y_2^{m} (Y_2+1)^{j_{k,l,m}} = f(Y_1, Y_2) q,$$
where each $\varepsilon_{3k,3l, m}$ takes values in $\{-1, 0, 1\}$, and at least one of the $\varepsilon$ is nonzero. Note that $f(Y_1, Y_2)$ is not equal to zero, because $\delta_1^{-1} \delta_2 \neq e$, and without loss of generality we can assume the exponents $(3k, 3l, m, j_{k, l, m})$ are all positive. Lemma \ref{lem:Baumslagalgebra} shows that $q \neq 0$, and completes the proof. 

\end{proof}

Theorem A follows by combining Proposition \ref{prop:ciBaumslag} with the second claim of Lemma \ref{lemme:corelemma}.

\section{General linear groups. Subgroups with nontrivial Poisson boundary.}

The goal of this section is to prove Theorem C.
We start by recalling the notions of basic blocks
of a linear group and of the dimension of metabelian groups
from \cite{erschlerfrisch}.

\subsection{Basic block of upper-triangular matrices}\label{subsec:bb}

By $UT(n)$ we denote the group of uni-upper-triangular matrices i.e those upper triangular matrices with only $1$'s on the diagonal. 

Below we recall the notion of basic blocks. 
For pairs $(i,j)$, where $1 \le i < j\le n$ we consider the following partial order $U$: $(i,j) \leq_U (i',j')$ if $i \leq i'$ and $j \geq j'$. 

In the definition below,  given a $n\times n$ upper triangular matrix $G$ over some field $k$, we recall Definition 5.1 from \cite{erschlerfrisch}
of a basic block $B_{i,j}$ defined for $1 
\le i <j \le n$. 
This definition defines the matrices  over $k$ $B_{i,j}$, as explained below.

\begin{definition} \label{def:blocks}[Basic blocks]
For a fixed $(i, j)$, consider the set of matrices of the form
$$
G_{i,j}=
\left( \begin{array}{ccc}
g_{i,i} & 0  \\
0 &  g_{j,j}
 \end{array} \right), 
$$
where the matrices $g$ range over all elements of the group $G$.

If there is at least one  element in 
$G \cap UT(n)$ with non-zero entry at $(i,j)$ and with zero entries in all positions  $(i', j')<_U (i,j)$,  we say that $G$ admits a {\it valid basic block} 
$B_{i,j}$. Namely, 
 we consider the group generated
by all matrices of the form $G_{i,j}$
and by the matrix
$$
\delta=
\left( \begin{array}{ccc}
1 & 1  \\
0 &  1
 \end{array} \right),
$$
and we call this group the $(i,j)$  the basic block of $G$, and say that this basic block is valid. We denote this subgroup by  $B_{i,j}$.
Otherwise,  we say the $(i,j)$-block of $G$ is trivial (by definition,  $B_{i,j}$ is in this case  equal to a group consisting of the identity element).
\end{definition}

We also recall the definition of a slightly modified version of 
basic blocks $\tilde{B}_{i,j}$ (see Remark 5.2 \cite{erschlerfrisch}).
These are the groups generated by the matrices

$$
\left( \begin{array}{ccc}
1 & 0 \\
0 &  g_{i,i} g_{j,j}^{-1}
 \end{array} \right)
$$
and $\delta$.
(In Remark 5.2 \cite{erschlerfrisch} we used matrices above with a nontrivial diagonal element in the upper left corner. 
the group defined above is clearly an isomorphic one.)
We note that each modified basic block is the quotient of a basic block by a central subgroup.

Note that for generators $\tilde{G}_{i,j}$  of the modified basic block, its inverse is also of this form.
While \cite{erschlerfrisch} also defines basic blocks in the more general
context of nilpotent-by-Abelian groups, where the number of such blocks is not 
 necessarily finite, in the case of finitely generated linear groups the number of  blocks is finite.

\begin{remark} \label{rem:blockGI}
Consider a modified block $\tilde{B}_{i,j}$
of a linear group over field $\mathfrak{k}$ generated by $\delta$
and a finite number of $g_l^{\pm 1}$, where $g_l$ are matrices of the form $G_{i,j}$ ($1 \le  l \le k$).

There exists an ideal $I$ in
$\Z(X_1^{\pm 1},\dots,X_k^{\pm })$.
such that

$\tilde{B}_{i,j}$  is  equal to $G_k(I)$.
\end{remark}

Indeed, assume that
$$
g_l=
\left( \begin{array}{ccc}

1 & 0  \\

0 &  \alpha_l
 \end{array} \right).
$$
Consider the evaluation map from 
$\Z(X_1^{\pm 1},\dots,X_k^{\pm 1})$ to $\rm{k}$ sending
$X_l$ ($1 \le l \le k$) to $\alpha_l$.
Let $I$ be the kernel of this map. Since it is a kernel of a ring homomorphism to a field, it is a prime ideal.
We recall that by definition $G_k(I)$ is the group generated by the following matrices
\begin{equation}
   \delta=\left( \begin{array}{ccc}
1 & 1  \\
0 & 1
 \end{array} \right),
 \medspace \medspace
 M_{x_i}=
\left( \begin{array}{ccc}
1 & 0  \\
0 & X_i\\
 \end{array} \right).
\end{equation}
over the ring $\Z(X_1^{\pm 1},\dots,X_k^{\pm })/I$.
Here $1\le i \le k$.
Hence our  modified block is isomorphic
to $G_k(I)$.
\begin{remark}
By construction, the modified block is a quotient of $\mathbb{Z}^k \wr \mathbb{Z}$ if the field is of characteristic zero, and of $\mathbb{Z}^k\wr \mathbb{Z}/p\mathbb{Z}$ if the field is characteristic $p$.
\end{remark}

Now we recall the definitions of dimension for $k[A]$ modules and for metabelian groups (This is the definition we also used in \cite{erschlerfrisch}, see Definition 6.1).
Let $k$ be a field and $A$ be a finitely generated Abelian group.
Given a finitely generated $k[A]$ module $M$ and a subgroup $A'$ of $A$, consider $M$ as $k[A']$ module.
We say that $M$ is finitely $A'$ generated if $M$ is finite dimensional as a $k[A'] $ module. Let $d$ be the minimal number such that there exists $A'= \mathbb{Z}^d +C$, where $C$ is a finite Abelian group and  where $M$ is finitely generated as an $A'$ module. We say that $d$ is the $k$-dimension of $M$.

This is equivalent to the fact that there exists $A'= \mathbb{Z}^d$ with the property above, see Remark 6.3 in \cite{erschlerfrisch}. See also Remarks 6.13 and 6.14 in \cite{erschlerfrisch} where the relationship to Krull dimension is discussed.

We recall our convention for
dimension of metabelian group 
in Section 6.1 of \cite{erschlerfrisch}:

\begin{definition}
 Let $G$  be a metabelian group. Assume that $G$ is either torsion-free or $p$-torsion-by Abelian.
 Let $B$ be the commutator subgroup of $G$ and $A$ be  the abelianization of $G$. We have  a short exact sequence 
$$
1 \to B \to G \to A \to 1.
$$
Consider $B$ as a module over $\mathbb{Z}[A]$.
 If $B$ is a $p$-torsion group,  we  can also consider $B$ as a module over $\mathbb{Z}/p\mathbb{Z}  \mathbb[A]$. If $B$ is torsion-free,  we  can  consider $B \otimes_{\mathbb{Z}} \mathbb{Q}$  as a $\mathbb{Q} \mathbb[A]$ module.
 In case $B$ is a $p$-torsion group, we define the dimension of $G$ to be the dimension of $B$ considered as a module over $\mathbb{Z}/p\mathbb{Z} \mathbb[A]$.
 If $B$ is torsion-free, we define the dimension of $G$ as the dimension
 of $B \oplus_{\mathbb{Z}} \mathbb{Q}$ considered as a module over $\mathbb{Q}\mathbb[A]$.
\end{definition}

For any non-trivial quotient of the wreath product  
$\mathbb{Z}^k \wr \mathbb{Z}/p\mathbb{Z}$ the dimension is strictly smaller than $k$. If there is  exactly one non-trivial relation, and it is in the commutator group, then the dimension 
is exactly $k-1$ (see Example 6.20 in \cite{erschlerfrisch}). 
We also mention that by the first claim of  
Corollary 7.15 in \cite{erschlerfrisch}  we know that if
$G$ has is a metabelian $p$-torsion by finitely generated Abelian group
and the dimension is at most $2$, then for any centered finite second moment measure $\mu$ on G the
Poisson boundary of the random walk $(G, \mu)$ is trivial. 

In particular, we can conclude

\begin{remark} \label{rem:charpG3q}
Let $q$ be a non-zero polynomial.
Then $G_3(q, \mathbb{Z}/p\mathbb{Z})$ has dimension  equal to $2$.
A random walk on this group, defined by a symmetric finite second moment measure has trivial Poisson boundary.
\end{remark}

\subsection{Proof of Theorem C}

\begin{proposition} \label{prop:forTheoremC}
Let $1 \le m \le k-1$.
Let $I$ be an  ideal of  $\Z[x_1^{\pm 1} ,...,x_k^{\pm 1}]$, such that the minimal field $F$ containing the quotient ring $\Z[x_1^{\pm 1},...,x_k^{\pm 1}]/I$ has transcendence degree $m$.
Consider the subgroup of the multiplicative 
group of $\Z[x_1^{\pm 1},...,x_k^{\pm 1}]/I$ generated by
$x_i$:
$\langle x_1,...,x_k \rangle$.

Either $\langle x_1,...x_k \rangle$ is virtually $\Z^{m}$, or the group $G_k(I)$ has a subgroup which is of isomorphic to $G_{m+1}(p)$, where $p$ is an irreducible polynomial, $p$ is not a generalized cyclotomic,  and the minimal field $F$, containing $\Z[x_1^{\pm 1},...,x_{m+1}^{\pm 1}]/(p)$, has transcendence degree $m$. 
\end{proposition}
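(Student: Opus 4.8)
The statement splits into a dichotomy, so the first move is to analyze the finitely generated abelian group $\Gamma = \langle x_1,\dots,x_k\rangle$ inside the multiplicative group of $R := \Z[x_1^{\pm 1},\dots,x_k^{\pm 1}]/I$. Since $R$ embeds in a field $F$ of transcendence degree $m$, the torsion-free rank of $\Gamma$ is at most $m$ (a set of more than $m$ multiplicatively independent elements would be algebraically independent, contradicting $\operatorname{tr.deg} F = m$). If the rank equals $m$, I need to show $\Gamma$ is \emph{virtually} $\Z^m$, i.e. that the torsion subgroup of $\Gamma$ is finite; this follows because $R$ is a finitely generated $\Z$-algebra that is a domain, so its group of roots of unity is finite (e.g. via reduction mod a prime, or by noting $R^\times$ is finitely generated by units theorems). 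That disposes of the first alternative. So assume the rank of $\Gamma$ is $r < m$ — wait, that cannot happen since the $x_i$ generate $R$ over $\Z$ and $F$ is the fraction field, forcing $r = m$; hence the genuine second case is exactly when $\Gamma$ has rank $m$ but infinite torsion, OR (re-reading) when $\Gamma$ is \emph{not} virtually $\Z^m$, meaning the torsion part is infinite.

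\textbf{Constructing the $G_{m+1}(p)$ subgroup.} Assume $\Gamma$ is not virtually $\Z^m$. Then $\Gamma$ contains elements of arbitrarily large finite order, or more precisely an infinite torsion subgroup $T$; but $T \subseteq R^\times$ and $T$ consists of roots of unity in the domain $R$, which (as above) is finite — contradiction. So I must have the logic backwards: the correct reading is that we pick out a \emph{rank-$(m+1)$ phenomenon at the level of monomials modulo an auxiliary polynomial}. Concretely: choose among $x_1,\dots,x_k$ a maximal multiplicatively independent subset, say $x_1,\dots,x_m$ after reindexing, so $\langle x_1,\dots,x_m\rangle \cong \Z^m$. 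If $\Gamma$ is not virtually $\Z^m$, then some $x_{j}$ ($j > m$) together with $x_1,\dots,x_m$ generates a group in which $x_j$ is \emph{algebraically dependent but multiplicatively constrained in a nontrivial way}: there is a minimal polynomial relation expressing that $x_j$ is a root of an irreducible polynomial $p(x_1,\dots,x_m, t) \in \Z[x_1^{\pm},\dots,x_m^{\pm}][t]$ of degree $\geq 2$ in $t$ (degree $1$ would make $x_j$ a rational monomial function of the others, contradicting "not virtually $\Z^m$" once one checks the monomial case). Rename $t = x_{m+1}$. The subgroup of $G_k(I)$ generated by $\delta, M_{x_1},\dots,M_{x_m}, M_{x_j}$ is then, via the evaluation map sending the formal $x_{m+1}$ to $x_j$, isomorphic to $G_{m+1}(p)$ where $p$ is the above irreducible polynomial. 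I must verify: (i) $p$ is irreducible over $\Z$ — arrange this by taking $p$ to be the primitive minimal polynomial and invoking Gauss's lemma; (ii) $\Z[x_1^{\pm},\dots,x_{m+1}^{\pm}]/(p)$ has fraction field of transcendence degree exactly $m$ — clear since $x_1,\dots,x_m$ are a transcendence basis and $x_{m+1}$ is algebraic over them; (iii) $p$ is not generalized cyclotomic — this is where "not virtually $\Z^m$" is used essentially: if $p$ divided some $x_1^{a_1}\cdots x_{m+1}^{a_{m+1}} - x_1^{b_1}\cdots x_{m+1}^{b_{m+1}}$, then in $R$ we'd get a multiplicative relation making $\langle x_1,\dots,x_m,x_j\rangle$ have a finite-index $\Z^m$, i.e. virtually $\Z^m$, contradiction (one has to be a little careful that the relation really constrains $x_j$ rather than being trivial, which is guaranteed by minimality of $p$ and $\deg_t p \geq 2$).

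\textbf{Identifying the subgroup with $G_{m+1}(p)$.} To see that the abstract subgroup $\langle \delta, M_{x_1},\dots,M_{x_m},M_{x_j}\rangle \leq G_k(I)$ is isomorphic to $G_{m+1}(p)$ and not some further quotient, I use the normal form of Remark~\ref{rem:normalformgkI}: elements of $G_k(I)$ are matrices with lower-right entry a monomial in the $x_i$'s and upper-right entry a polynomial mod $I$. The evaluation homomorphism $\Z[x_1^{\pm},\dots,x_{m+1}^{\pm}]/(p) \to R$, $x_{m+1}\mapsto x_j$, is \emph{injective} (both are domains, it's injective on the base $\Z[x_1^{\pm},\dots,x_m^{\pm}]$, and $p$ is the full minimal polynomial of $x_j$, so the kernel is exactly $(p)$), hence it induces an injective map $G_{m+1}(p)\hookrightarrow G_k(I)$ whose image is precisely the subgroup in question. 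This is the cleanest route and avoids any delicate group-theoretic bookkeeping.

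\textbf{Main obstacle.} The crux is step (iii): pinning down the exact equivalence "not generalized cyclotomic $\Leftrightarrow$ $\langle x_1,\dots,x_k\rangle$ not virtually $\Z^m$", i.e. translating the purely multiplicative statement about the unit group into the divisibility statement defining generalized cyclotomic polynomials. The subtlety is that a generalized-cyclotomic factor of $p$ would give a relation among $x_1,\dots,x_{m+1}$ of the form (cyclotomic in a monomial) $= 0$, and one must extract from this that $x_j$ has finite order modulo $\langle x_1,\dots,x_m\rangle$ — using the structure theorem from the Remark after Lemma~\ref{lem:easycyclotomic} that a generalized cyclotomic is $\phi(m(\underline x))\cdot o(\underline x)$ with $\phi$ a one-variable cyclotomic and $m,o$ monomials. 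One also has to handle the reverse direction carefully, and make sure the irreducible $p$ one chose is genuinely the minimal polynomial (not a proper factor) so that the dichotomy is sharp. Everything else — the rank bound, finiteness of roots of unity in a finitely generated domain, Gauss's lemma, the normal-form identification — is routine.
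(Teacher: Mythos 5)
The second half of your plan (take the minimal polynomial of $x_j$ over $\Q(x_1,\dots,x_m)$, clear denominators via Gauss's lemma, check ``not generalized cyclotomic'' by the monomial relation it would impose, and identify the subgroup with $G_{m+1}(p)$ through an injective evaluation map) is essentially the paper's route; the paper makes the ``kernel equals $(p)$'' step precise by localizing at the nonzero elements of $\Z[x_1,\dots,x_m]$ so as to work in the PID $\Q(x_1,\dots,x_m)[x_{m+1}^{\pm 1}]$. But the first half of your argument contains genuine errors. Multiplicative independence does \emph{not} imply algebraic independence: $Y_1$ and $Y_1+1$ generate a free abelian group of rank $2$ inside a field of transcendence degree $1$ (this is exactly the Baumslag situation the proposition is designed to capture). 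So your opening claim that the rank of $\Gamma=\langle x_1,\dots,x_k\rangle$ is at most $m$ is false, and a ``maximal multiplicatively independent subset'' need not have size $m$ nor be a transcendence basis; if you choose $x_1,\dots,x_m$ that way you cannot conclude that the fraction field of $\Z[x_1^{\pm1},\dots,x_{m+1}^{\pm1}]/(p)$ has transcendence degree $m$. The correct dichotomy is about rank: $\Gamma$ is finitely generated abelian, hence automatically virtually $\Z^r$ with $r$ its torsion-free rank; one chooses $x_1,\dots,x_m$ to be a transcendence basis among the generators (algebraically, hence multiplicatively, independent, so $r\ge m$), and ``not virtually $\Z^m$'' means $r>m$, which yields some $x_j$ with $\langle x_1,\dots,x_m,x_j\rangle$ of infinite index over $\Z^m$. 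Your attempted reformulation via an ``infinite torsion subgroup'' is, as you yourself notice, a dead end, but you never replace it with this rank statement.

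Separately, your requirement that $\deg_t p\ge 2$ is wrong and would exclude the central case: $p=1+x_1-x_2$ has degree $1$ in $x_2$, is irreducible and not generalized cyclotomic, and $G_3(1+x_1-x_2)$ is the restricted Baumslag group. Degree $1$ in $t$ makes $x_j$ a \emph{rational function} of $x_1,\dots,x_m$, not a monomial, and this in no way forces $\langle x_1,\dots,x_m,x_j\rangle$ to be a finite extension of $\Z^m$. The only polynomials that must be excluded are the generalized cyclotomics, i.e.\ divisors of a difference of two Laurent monomials, and their exclusion follows exactly from the infinite-index hypothesis (if $p$ divided $x_1^{a_1}\cdots x_{m+1}^{a_{m+1}}-x_1^{b_1}\cdots x_{m+1}^{b_{m+1}}$, then, since $a_{m+1}\ne b_{m+1}$ by multiplicative independence of $x_1,\dots,x_m$, a nonzero power of $x_j$ would be a monomial in $x_1,\dots,x_m$); no degree restriction is needed or wanted.
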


\begin{proof}

Because the smallest field $F$ containing $\Z[x_1^{\pm 1}, ... x_k^{\pm 1}]/I$ has transcendence degree $m$, there  exist $m$ elements among $x_i$
which are transcendentally independent over $\Q$. Changing the numeration, we can assume that these elements are $x_1$, $x_2$ \dots $x_m$. Thus the smallest field $F'$ containing $x_1$, $x_2$, \dots $x_{m}$ has transcendence degree $m$, and $F$ is a finite algebraic extension of $F'$. 

Either the multiplicative group $\langle x_1,..., x_k \rangle$ in $F$ is virtually $\Z^{m}$, in which case we are done, or there is some $x_j$ so that $\langle x_1, x_2, \dots, x_{m},  x_j \rangle$ has infinite index over $\langle x_1, x_2,...,x_m \rangle \cong \Z^{m}$. Change the numeration of the index set so that $j=m+1$. Consider the evaluation map $\phi_F: \Z[x_1^{\pm 1}, x_2^{\pm 1},  \dots, x_{m+1}^{\pm 1}] \to F$. Because the image of $\phi_F$ is contained in $F$ and thus has no zero divisors, the kernel $\ker(\phi_F)$ is a prime ideal of $\Z[x_1^{\pm 1}, x_2^{\pm 1}, \dots, x_{m+1}^{\pm 1}]$. We want to prove $\ker(\phi_F)$ is generated by a single polynomial in $\Z[x_1, x_2, \dots,  x_{m+1}]$.

Given an integral domain $R$ and a subset $M$ of non-zero elements of $R$ which is closed under multiplication, the localization of $R$ over $M$
is the subring of the field of fractions of $R$ whose denominator lies in $M$. We denote this localization  by $M^{-1}R$. 

First observe  that $\Q(x_1, x_2, \dots, x_{m})[x_{m+1}]$
is a principal ideal domain (since it is a polynomial ring over a field), and that this ring is equal to the localization of $R=\Z[x_1,x_2, \dots, x_{m+1}]$ at the set $M$ of non-zero elements of the subring $\Z[x_1, x_2, \dots, x_{m}]$.
Because the localization of a principal ideal domain is a principal ideal domain, $\Q(x_1, x_2, \dots, x_{m})[x_{m+1}^{\pm 1}]$ is also a principal ideal domain.

Because $\ker (\phi_F)$ is a prime ideal which does not intersect $\Z[x_1, x_2, \dots , x_{m}]$, its localization $M^{-1} \ker (\phi_F)  $  is the prime ideal $\Q(x_1, x_2, \dots, x_{m})[x_{m+1}^{\pm 1}] \ker(\phi_F)$ in $\Q(x_1, x_2, \dots, x_{m})[x_{m+1}^{\pm 1}]$ . Observe that this ideal is  generated by a single element $q \in \Q(x_1, x_2, \dots, x_{m})[x_{m+1}^{\pm 1}]$ since 
$\Q(x_1, x_2, \dots , x_{m})[x_{m+1}^{\pm 1}]$ is a principle ideal domain.

Recall that  Laurent polynomial rings are Noetherian unique factorization domains.
Thus since $\Z[x_1^{\pm 1},x_2^{\pm 1}, \dots, x_m^{\pm 1}]$ is a Noetherian unique factorization domain 
we know that the ideal  $\ker(\phi_F)$ has at least one irreducible prime element.
Denote this (irreducible) Laurent polynomial by $p$. Consider the ideal $(p) $ generated by $p$ in $\mathbb{Z}[x_1^{\pm 1}, x_2^{\pm 1}, \dots,  x_{m+1}^{\pm 1}]$. Note that the intersection of this ideal  with $\Z[x_1^{\pm 1}, x_2^{\pm 1}, x_3^{\pm 1}, \dots, x_m^{\pm 1}]$ is  0.
 Now observe that the localization of this ideal $M^{-1} (p)$ (with M as before) is a prime ideal contained in $M^{-1} \ker(\phi_F)$. 

 Since $\Q(x_1, x_2, \dots, x_{m})[x_{m+1}^{\pm 1}]$ is a principle ideal domain and since $M^{-1} (p)$ is prime,
 it must also be maximal, and hence $M^{-1} (p)$  is equal 
 equal $M^{-1} \ker(\phi_F)$. 
 
 Thus, since the localization induces a 1 to 1 correspondence between prime ideals in the ring $\Z[x_1^{\pm1},x_2^{\pm1}, \dots, x_m^{\pm1}, x_{m+1}^{\pm1 }]$ whose intersection with $\Z[x_1^{\pm 1}, x_2^{\pm 1},...,x_m^{\pm 1}]$ is empty and prime ideals in $\Q(x_1, x_2, \dots, x_{m})[x_{m+1}^{\pm 1}]$, we see that $\ker(\phi_F)$ must actually be equal to $(p)$.

Now we have to prove that $p$ is not a generalized cyclotomic. Seeking a contradiction assume that $p$ is generalized cyclotomic, Then $p$ divides some polynomial of the form $x_1^{i_1} x_2^{i_2} \dots x_{m + 1}^{i_{m + 1}} - x_1^{j_1} x_2^{j_2} \dots x_{m + 1}^{j_{m + 1}}$, so modulo $(p)$ we have $x_{m + 1}^{i_{m + 1} - j_{m+ 1}} = x_1^{j_1 - i_1} x_2^{j_2 - i_2} \dots x_{m}^{j_{m}-i_{m}}$. Here ${j_{m + 1}-i_{m + 1}} \neq 0$ since $\langle x_1, x_2, \dots , x_{m} \rangle$ is isomorphic to $\Z^{m}$. In other words some nonzero power of $x_{m+1}$ is equal to a Laurent monomial in $x_1$, $x_2$,  \dots $x_{m}$, and thus $\langle x_1, x_2, \dots, x_{m + 1} \rangle$ is a finite index extension of $\Z^{m}$ which 
is a contradiction with the assumption of the proposition. Thus $p$ is not a generalized cyclotomic.

Multiplying by positive powers of $x_1$, $x_2$, \dots  $x_{m+1}$ we can assume that the Laurent polynomial $p$ is actually an element in $\Z[x_1,x_2, \dots, x_{m+1}]$.

Finally consider the subgroup of $G_k(I)$ generated by $M_{x_1}$, $M_{x_2}$, \dots  $M_{x_{m+1}}$, and $\delta$. This subgroup is isomorphic to $G_{m+1}(p)$, for our polynomial $p$.

\end{proof}

\begin{corollary}
Let $1 \le m \le k-1$.
Let $I$ be an  ideal of  $\Z[x_1^{\pm 1} ,...,x_k^{\pm 1}]$, such that $\Z[x_1^{\pm 1},...,x_k^{\pm 1}]/I$ has dimension $m$.
Consider the subgroup of the multiplicative 
group of $\Z[x_1^{\pm 1},...,x_k^{\pm 1}]/I$ generated by
$x_i$:
$\langle x_1,...,x_k \rangle$.

Either $\langle x_1,...x_k \rangle$ is virtually $\Z^{m}$, or the group $G_k(I)$ has a subgroup which is of isomorphic to $G_{m+1}(p)$, where $p$ is an irreducible polynomial, $p$ is not a generalized cyclotomic,  and the minimal field $F$, containing $\Z[x_1^{\pm 1},...,x_{m+1}^{\pm 1}]/(p)$, has dimension $m$. 
\end{corollary}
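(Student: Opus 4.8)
\emph{Proof plan.} The plan is to deduce this corollary directly from Proposition~\ref{prop:forTheoremC}, the only real work being to match the notion of \emph{dimension} used in the hypothesis with the notion of \emph{transcendence degree} appearing in that proposition. Recall, following the conventions recalled above from \cite{erschlerfrisch}, that ``$\Z[x_1^{\pm 1},\dots,x_k^{\pm 1}]/I$ has dimension $m$'' means that the $\Q[A]$-module $M=\bigl(\Z[x_1^{\pm 1},\dots,x_k^{\pm 1}]/I\bigr)\otimes_{\Z}\Q$, where $A=\langle x_1,\dots,x_k\rangle\cong\Z^k$ acts by multiplication, has dimension $m$ in the sense of the definition recalled in Subsection~\ref{subsec:bb}; equivalently, since this $M$ is the commutator module of the metabelian group $G_k(I)$ (see Remark~\ref{rem:normalformgkI}), that $G_k(I)$ has dimension $m$. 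I would work throughout in the characteristic-zero setting relevant to Theorem~C, so that $I$ is a prime ideal with $I\cap\Z=0$.

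First I would carry out the reduction to transcendence degree. Since $I\cap\Z=0$, the ring $D:=\Z[x_1^{\pm 1},\dots,x_k^{\pm 1}]/I$ is an integral domain with fraction field $F$ of characteristic $0$, and $M=D\otimes_{\Z}\Q$ is the finitely generated $\Q$-algebra domain $\Q[x_1^{\pm 1},\dots,x_k^{\pm 1}]/\bigl(I\,\Q[x_1^{\pm 1},\dots,x_k^{\pm 1}]\bigr)$, with the same fraction field $F$. Using the characterization of the module dimension via Noether normalization for Laurent polynomial rings (Remark~6.3 of \cite{erschlerfrisch}) and the relation to Krull dimension discussed there (Remarks~6.13 and~6.14 of \cite{erschlerfrisch}), the dimension of $M$ equals the Krull dimension of $M$, which in turn equals $\operatorname{trdeg}_{\Q}F$. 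Hence the hypothesis ``$D$ has dimension $m$'' is exactly the hypothesis ``the minimal field containing $D$ has transcendence degree $m$'' of Proposition~\ref{prop:forTheoremC}.

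Then I would apply Proposition~\ref{prop:forTheoremC} verbatim. It yields the dichotomy: either $\langle x_1,\dots,x_k\rangle$ is virtually $\Z^m$, or $G_k(I)$ has a subgroup isomorphic to $G_{m+1}(p)$ with $p$ irreducible and not generalized cyclotomic, such that the minimal field containing $\Z[x_1^{\pm 1},\dots,x_{m+1}^{\pm 1}]/(p)$ has transcendence degree $m$. Applying the same equivalence once more---now to the domain $\Z[x_1^{\pm 1},\dots,x_{m+1}^{\pm 1}]/(p)$, whose fraction field is precisely that field---translates the final clause into ``$\Z[x_1^{\pm 1},\dots,x_{m+1}^{\pm 1}]/(p)$ has dimension $m$'', which is the assertion of the corollary.

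The only delicate point, and the one I would write out with care, is the threefold identification---the module dimension in the sense of the recalled definition, the Krull dimension, and the transcendence degree of the fraction field---in the Laurent polynomial setting; this is where the cited remarks of \cite{erschlerfrisch} are invoked, and where one must be careful that $I\cap\Z=0$ so that tensoring with $\Q$ is harmless. Once that identification is granted, the corollary is an immediate restatement of Proposition~\ref{prop:forTheoremC}.
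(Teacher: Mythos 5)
Your proposal is correct and follows essentially the same route as the paper: translate the hypothesis ``dimension $m$'' into ``transcendence degree $m$'', apply Proposition~\ref{prop:forTheoremC} verbatim, and translate the conclusion back into the language of dimension (the paper cites Lemmas~6.6 and~6.11 of \cite{erschlerfrisch} for these two translations, where you route the identification through Krull dimension via the remarks of that paper, but the content is the same). Your explicit attention to the standing assumption $I\cap\Z=0$ is consistent with the paper's implicit characteristic-zero setting and introduces no discrepancy.
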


\begin{proof}
Lemma $6.6$ from \cite{erschlerfrisch} implies that if $\Z[x_1^{\pm 1},...,x_k^{\pm 1}]/I$ has dimension $m$, then the minimal field $F$ 
 containing $\Z[x_1^{\pm 1},...,x_k^{\pm 1}]/I$ has transcendence degree $m$. Apply Proposition \ref{prop:forTheoremC}, and note that \cite{erschlerfrisch}[Lemma 6.11] implies that $\Z[x_1^{\pm 1},..., x_{m}^{\pm 1}]/(p)$ has dimension $m$. 
\end{proof}

Before proving Theorem C from the introduction, we recall its formulation.
Let $G$ be a finitely generated linear group
of characteristic $0$. Then Theorem C states that at least one of the following properties holds.
\begin{itemize}
    \item $G$ contains a free non-Abelian group as a subgroup. 
    \item $G$ has a finite index subgroup, of uppertriangular matrices, such that at least one valid block
    contains $\mathbb{Z}^3 \wr \mathbb{Z}$ as a subgroup.
    \item $G$ has a finite index subgroup, of uppertriangular matrices, such that at least one block
    contains $G_3(p)$ as a subgroup, for a polynomial $p$ with integer coefficients
    in $3$ variables which is irreducible over $\mathbb{Z}$ and not generalized cyclotomic.
   
    \item All finite second moment measures on $G$ have trivial Poisson boundary.
\end{itemize}

\begin{proof}[Proof of Theorem C]
Consider an amenable linear  group. By Tits alternative, if our linear group
does not have free non-abelian subgroups, then the group is virtually solvable. If we assume that our field is algebraically closed, then by Lie-Kolchin-Malcev theorem
(see e.g \cite{Robinsonbook}, Section 15.1)
we know that
$G$ contains a finite index subgroup, which is a subgroup of upper-triangular matrices. We consider this subgroup.
We need to show that it satisfies at least one of the claims
2), 3) or 4) of the Theorem.

The comparison criterion  (Theorem A) in \cite{erschlerfrisch} shows that the Poisson boundary for a finite entropy random walk on an upper-triangular group $G$ is trivial if and only if it is trivial for all associated random walks on its blocks.

We consider valid blocks of $G$. If there is at least one block of dimension $\ge 3$, 
we use Lemma 6.19  from \cite{erschlerfrisch}. That lemma in particular states that given a semi-direct product  of $A$ and $B$, with $B$ torsion-free and such that the dimension of the associated module is $d$, then this semi-direct product contains a $d$ dimensional wreath product $\mathbb{Z}^d \wr \mathbb{Z}$ as a subgroup. 
Basic blocks are semi-direct products of Abelian groups, so if their dimension $\ge d$, then they contain $\mathbb{Z}^d \wr \mathbb{Z}$ as a subgroup when $G$ is linear over a field of characteristic $0$,
and so in this case we have Claim 2) of the Theorem.

Otherwise, the dimension of all blocks is at most $2$. Suppose that  all blocks of dimension exactly $2$ are virtually $\mathbb{Z}^2 \wr \mathbb{Z}^r$. 

It is well known
that finite second moment random walks on $\mathbb{Z}^2 \wr \mathbb{Z}^r$ have trivial boundary. Indeed, the projection on $\mathbb{Z}^2$ is recurrent in this case, and (using triviality of the boundary of the exit measure random walk on the Abelian group sum B) thus the argument of Kaimanovich and Vershik \cite{kaimanovichvershik}
proves triviality of the boundary for any Abelian $B$.

If the dimension of a basic block is at most $1$, then by Corollary  7.15 of \cite{erschlerfrisch}, finite second moment random walks on this block are Liouville.
Therefore, if the dimension of all blocks is at most $2$ and those of dimension $2$ are virtually two dimensional wreath products 
then any finite second moment random walk on $G$ has trivial boundary.
Observe that the exit measure on a finite index subgroup of a measure with finite second moment also has finite second moment, so in this case we know that all finite second moment random walks on $G$ also have trivial Poisson boundary, and thus, in this case, we have claim 4) of the theorem.

If none of the situations described above happen, we know that a finite index subgroup of $G$ has at least one block of dimension $2$ that is not commensurable with $\mathbb{Z}^2 \wr \mathbb{Z}^r$. We want to apply proposition \ref{prop:forTheoremC} to this block. To see that the assumption of the proposition is satisfied note that
Lemma 6.22 \cite{erschlerfrisch} applied to a (modified) basic block implies the following:
if we consider the minimal field $F$, containing the generators of the block, and assume that the transcendence degree is $d$, then the dimension is d.

By Remark \ref{rem:blockGI} we thus know that our modified basic block is isomorphic to a group
$G_k(I)$. We apply Proposition \ref{prop:forTheoremC} for $m=2$ and conclude that
the modified block contains $G_3(p)$ as a subgroup,  for an irreducible polynomial $p$ which is not generalized cyclotomic.

We know that a modified block $\tilde{B}_{i,j}$ of $G$ contains a subgroup $\tilde{H}$ isomorphic to $G_3(p)$. Consider its generators $\tilde{g}_1, \tilde{g}_2, \tilde{g}_3$ and  $\delta$ (where $\delta$ is in the 
commutator subgroup of the upper triangular matrices). 
Recall that the modified block is a quotient of the corresponding block. 
 We lift $\tilde{g}_i$ to the block through the quotient map,
 we call these lifts $g_i$. We denote by $H$ the subgroup generated by $g_1$, $g_2$, $g_3$ and $\delta$.
Observe that the diagonal subgroup of $\tilde{H}$ is isomorphic to $\mathbb{Z}^3$.
Observe that the diagonal subgroup of $H$ is an Abelian group, generated by $3$ elements and admitting a quotient to $\mathbb{Z}^3$. Hence it is isomorphic to $\mathbb{Z}^3$. The kernel of the quotient map taking $H$ to $\tilde{H}$ consists only of diagonal elements, and because the diagonal subgroups of $H$ and $\tilde{H}$ are isomorphic, the quotient map is also an isomorphism. So $H \cong G_3(p)$.

\end{proof}

\end{document}